\def\O{{\Omega}}
\def\p{{\partial}}
\def\div{{\mbox{div}}}
\journal{Journal}
\begin{document}

\begin{frontmatter}

\title{The Deep Learning Galerkin Method for the General Stokes Equations}

\author[d1]{Jian Li\corref{mycorrespondingauthor}}
\cortext[mycorrespondingauthor]{Corresponding author(Jian Li) email: jianli@sust.edu.cn\\Partly supported by the NSF of China 11771259 and Shaanxi special support plan for regional development of talents. Department of Mathematics, Shaanxi University of Science and Technology, Xi'an 710021, China.}
\author[d2]{Jing Yue}
\author[d1]{Wen Zhang}
\author[d3]{Wansuo Duan}

\address[d1]{Department of Mathematics, Shaanxi University of Science and Technology, Xi'an}
\address[d2]{School of electrical and control engineering, Shaanxi University of Science and Technology, Xi'an}
\address[d3]{Institute of Atmospheric Physics, Chinese Academy of Sciences, Beijing}

\begin{abstract}
The finite element method, finite difference method, finite volume method and spectral method have achieved great success in solving partial differential equations. However, the high accuracy of traditional numerical methods is at the cost of high efficiency. Especially in the face of high-dimensional problems, the traditional numerical methods are often not feasible in the subdivision of high-dimensional meshes and the differentiability and integrability of high-order terms. In deep learning, neural network can deal with high-dimensional problems by adding the number of layers or expanding the number of neurons. Compared with traditional numerical methods, it has great advantages. In this article, we consider the Deep Galerkin Method (DGM) for solving the general Stokes equations by using deep neural network without generating mesh grid. The DGM can reduce the computational complexity and achieve the competitive results. Here, depending on the $ L^{2} $ error we construct the objective function to control the performance of the approximation solution. Then, we prove the convergence of the objective function and the convergence of the neural network to the exact solution. Finally, the effectiveness of the proposed framework is demonstrated through some numerical experiments.
\end{abstract}

\begin{keyword}
general Stokes equations; Deep Galerkin Method; convergence; neural network; deep learning.
\MSC[2020] 00-01\sep  99-00
\end{keyword}

\end{frontmatter}

\section{Introduction}
Partial Differential Equations (PDEs) can mathematically model and describe certain objective laws in the fields of physical chemistry, finance, natural phenomenon and engineering technology \emph{et al}. However, most of them are difficult to obtain the analytical solution. Consequently, numerical methods such as finite element method have been flourishing in the past decades for modeling mechanics problems via solving  PDEs \cite{4}. Alternatively, the other methods, just like generalized finite element basis functions \cite{19} and construction of multiple difference schemes \cite{20} have broad applications in the same way. Although these methods are well used in PDEs and achieved good results, almost all of them have obvious drawbacks, complexity in general problems and no longer apparent since lots of mesh grid are generated especially for high dimensional problems. Besides, there has many large problems in computational fluid dynamics, such as uncertainty quantification, Bayesian inversion, data assimilation and constrained optimization of partial differential equations, which are considered to be very challenging because they require a large number of numerical solutions of the corresponding PDEs.

Inspired by machine learning,  the deep learning method can learn the parameters of neural network from the sampled data which can avoid mesh generation to some certain extent. Deep learning method has certain adaptability for unknown data, guarantee the high accuracy through training the models and currently gains a lot of interests for efficiently solving PDEs. It has been considered in various forms previously since the 1990s. Cellular Neural Network and Distributed Parameter Neural Network are used for one-dimensional PDEs \cite{21,22,23}. Apart from these, single layer chebyshev neural network \cite{26}, recurrent neural network and ansatz method \cite{27,28} can also solve the PDEs similarly. More generally, Sun \emph{et al.} \cite{24, 25} used Bernstein neural network and extreme learning machine to solve first and second order ordinary differential equations and elliptic PDEs.

Due to the rapid development of computer and gradient optimization methods in recent decades, many approaches for solving high-dimensional problems are actively proposed based on deep learning techniques.  Raissi {etc.} \cite{30} solved the Black-Scholes-Barenblatt and Hamilton-Jacobi-Bellman equations, both in 100 dimensions. Besides, they proposed and developed a typical method physics-informed neural networks which combines observed data with PDE models \cite{ Raissi1, Raissi2, Raissi3} in many problems \cite{Yang, Rao, Olivier, Lu, Fang, Pang}. As for higher-dimensional parametric PDEs system, Sirignano and Spiliopoulos proposed a continuous time stochastic gradient descent method \cite{31} and DGM \cite{32} for PDEs both in 200 dimensions. They applied a deep neural network instead of a linear combination of basis functions. Recently, Zhu \emph{et al.} and Xu \emph{et al.} {\cite{ Zhu1, Zhu2, Xu1, Xu2}} proposed Bayesian deep convolutional encoder-decoder network and the combination of genetic algorithm and adaptive method to solve the problems with high-dimensional random inputs and sparse noisy data. As we all know, there has mathematical guarantees called universal approximation theorems \cite{33} which stating that a single layer neural network could approximate many functions in Sobolev spaces. It still lack theoretical method to explain the effectiveness of multilayer neural networks.

In this paper, the DGM is first applied to solve the general $d$-dimentional incompressible Stokes problems, which is trained on batches of randomly sampled points satisfying the differential operator, initial condition and boundary condition without generating mesh grid. The optimal solution is obtained by using the stochastic gradient descent method instead of a linear combination of basic functions. In particular, this method overcomes the infeasibility and limitations of the traditional numerical methods especially for the high dimensional incompressible Stokes equations. Based on the objective function, the DGM numerically manifests the efficiency and flexibility. Moreover, we prove the convergence of the objective function and the convergence of the neural network and the exact solution.

The rest of the paper is developed into four sections. In the next section, we provide the preliminaries of methodology. In Section III, we prove the convergence of the objective function and the convergence of the neural network to the exact solution. In Section IV, numerical examples demonstrate the efficiency of the proposed framework and justify our theoretical analysis. Finally, we summarize our paper with a short discussion.

\section{Methodology}
Let $\Omega$ be a bounded, compact and open subset of $\mathbb{R^{\mathit{d}}}(d=2,3,...)$. With regular boundary $\partial\Omega\subset\mathbb{\mathbb{R^{\mathit{d\textrm{-1}}}}}$. We consider the general Stokes equations with Dirichlet boundary condition.
\begin{align}
\alpha u-\nu\nabla^{2}u+\nabla p&=f,\ \ \ \textrm{in}\ \Omega,\label{stokes-1}\\
\nabla\cdot u&=0,\ \ \ \textrm{in}\ \Omega,\label{stokes-2}\\
u&=g,\ \ \ \textrm{on}\ \partial\Omega,\label{stokes-3}
\end{align}
where $\alpha>0$ is a positive constant, $\nu$ denotes the viscosity coefficient, $u$ and $p$ represent velocity and pressure respectively, $\mathit{f}$ and $\mathit{g}$ are source terms. For notational brevity, we set $\overline{u}=(u,p)$ and define
\begin{equation}\label{6}
\mathcal{G}[\overline{u}]=\alpha u-\nu\nabla^{2}u+\nabla p-f.
\end{equation}
Here, we recall the classical Sobolev spaces

$$H^k(\Omega)=\Big\{\upsilon\in L^2(\Omega): D_w^{\alpha}\upsilon\in L^2(\Omega), \forall\alpha: \mid\alpha\mid\leq k\Big\},$$
$$H_0^k(\Omega)=\Big\{\upsilon\in H^k(\Omega): \upsilon\mid_{\partial\Omega}=0\Big\},$$
and their norm
$$\parallel\upsilon\parallel_k=\sqrt{(\upsilon,\upsilon)_k}=\bigg\{\sum_{\mid\alpha\mid=0}^{k}\int_{\Omega}(D_w^{\alpha}\upsilon)^{2}dx\bigg\}^{\frac{1}{2}},$$
where $k>0$ is a positive integer, $D_w^{\alpha}\upsilon$ is the generalized derivative of $\upsilon$, and $(\cdot,\cdot)$ represents the inner product.

In order to obtain a well-posedness of the general Stokes equations, we have the following result.
\newtheorem{lemma}{Lemma}[section]
\begin{lemma} \label{lemma1}
Assume that $\Omega$ is a bounded and connected open subset of $\mathbb{R}^d$ with a Lipschitz-continuous boundary $\Gamma$, $f\in[L^2(\Omega)]^d$ and $g\in [H^{1/2}(\Gamma)]^d$ such that
$$\int_\Gamma g\cdot \overrightarrow{n} ds=0,$$
there exists a unique pair $\overline{u}\in [H_{0}^{1}(\Omega)]^d\times L_0^2(\Omega)$ of the general Stokes equations (\ref{stokes-1})-(\ref{stokes-3}). Furthermore, we have \begin{align}
\big\| u\big\|_{2}+\big\| p\big\|_{1}\leq C(\big\| f\big\|_{-1}+\big\| g\big\|_{3/2,\partial\Omega}).
\end{align}
\end{lemma}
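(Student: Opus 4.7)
The plan is to establish well-posedness via a classical three-step argument: reduction to homogeneous boundary conditions through a solenoidal lifting of the Dirichlet data, application of the Lax--Milgram theorem on the divergence-free subspace to recover the velocity, and recovery of the pressure via the de Rham theorem together with the Babu\v{s}ka--Brezzi inf-sup condition. The a~priori estimate then follows by combining the energy identity with elliptic regularity for the Stokes system.

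First, since $g\in[H^{1/2}(\Gamma)]^d$ satisfies the flux compatibility $\int_\Gamma g\cdot\vec n\,ds=0$, a classical lifting result (cf.\ Girault--Raviart) provides $u_g\in[H^1(\Omega)]^d$ with $\nabla\cdot u_g=0$ in $\Omega$, $u_g|_\Gamma=g$, and $\|u_g\|_1\leq C\|g\|_{1/2,\Gamma}$. Writing $w=u-u_g$ reduces the problem to
\begin{equation*}
\alpha w-\nu\nabla^{2}w+\nabla p=\tilde f,\ \nabla\cdot w=0\ \text{in}\ \Omega,\ w=0\ \text{on}\ \partial\Omega,
\end{equation*}
with $\tilde f=f-\alpha u_g+\nu\nabla^{2}u_g\in[H^{-1}(\Omega)]^d$.

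On the closed subspace $V=\{v\in[H_0^1(\Omega)]^d:\nabla\cdot v=0\}$ I would study the bilinear form $a(w,v)=\alpha(w,v)+\nu(\nabla w,\nabla v)$. Because $\alpha,\nu>0$, $a$ is continuous and coercive on $V$ (Poincar\'e's inequality, if needed, controls the lower bound), so the Lax--Milgram theorem produces a unique $w\in V$ with $a(w,v)=\langle\tilde f,v\rangle$ for every $v\in V$. To recover the pressure, note that the functional $v\mapsto\langle\tilde f,v\rangle-a(w,v)$ on $[H_0^1(\Omega)]^d$ vanishes on $V$; by de Rham's theorem, equivalently by the surjectivity of the divergence operator $\nabla\cdot:[H_0^1(\Omega)]^d\to L_0^2(\Omega)$ which holds on Lipschitz domains via the inf-sup condition, there exists a unique $p\in L_0^2(\Omega)$ representing it as $-(p,\nabla\cdot v)$ for all test functions $v$. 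Translating $w$ back by $u_g$ produces the desired pair $\overline u\in[H_0^1(\Omega)]^d\times L_0^2(\Omega)$, and uniqueness follows because the corresponding homogeneous system admits only the trivial solution by coercivity and the inf-sup bound.

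Finally, the coercive energy identity gives $\|w\|_1\leq C\|\tilde f\|_{-1}$, the inf-sup condition supplies $\|p\|_0\leq C(\|\tilde f\|_{-1}+\|w\|_1)$, and elliptic regularity for the Stokes operator upgrades the pair to $(u,p)\in[H^2(\Omega)]^d\times H^1(\Omega)$ with the stated bound $\|u\|_{2}+\|p\|_{1}\leq C(\|f\|_{-1}+\|g\|_{3/2,\partial\Omega})$. The main obstacle, in my view, is precisely this last regularity upgrade: it silently requires $g\in H^{3/2}(\Gamma)$ and $\partial\Omega$ somewhat smoother than Lipschitz (e.g.\ $C^{1,1}$ or convex polyhedral), so the hypotheses as literally stated only guarantee existence, uniqueness, and an $H^1\times L^2$ bound; closing the full $H^2\times H^1$ estimate demands strengthening these hypotheses and invoking both interior and boundary regularity results for the steady Stokes problem.
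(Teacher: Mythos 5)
Your argument is correct, and it is essentially the proof the paper relies on: the lemma is stated without proof, and the only justification offered (in the proof of Theorem 3.2) is an appeal to ``the Saddle point theorem,'' i.e.\ precisely the machinery you spell out --- solenoidal lifting of $g$, Lax--Milgram on the divergence-free subspace $V$, and recovery of $p\in L_0^2(\Omega)$ via de Rham/the inf-sup condition, as in Girault--Raviart. Your closing caveat is also well taken and identifies a real defect in the statement: with only $g\in[H^{1/2}(\Gamma)]^d$ and a Lipschitz boundary, the argument delivers existence, uniqueness, and an $H^1\times L^2$ bound, whereas the displayed estimate $\|u\|_2+\|p\|_1\leq C(\|f\|_{-1}+\|g\|_{3/2,\partial\Omega})$ tacitly requires $g\in[H^{3/2}(\Gamma)]^d$ and a smoother (e.g.\ $C^{1,1}$ or convex) boundary for Stokes regularity; note further that the asserted membership $\overline u\in[H_0^1(\Omega)]^d\times L_0^2(\Omega)$ is compatible with the boundary condition $u|_\Gamma=g$ only when $g=0$, so the velocity space should read $[H^1(\Omega)]^d$ in the inhomogeneous case.
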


Generally, assuming that $\overline{U}=\big(U(\mathbf{x;\theta_{1}}),P(\mathbf{x;\theta_{2}})\big)$ is the neural network solution to the general Stokes equations (\ref{stokes-1})-(\ref{stokes-3}), $\mathbf{\theta_{1}}$ and $\mathbf{\theta_{2}}$ are the stacked components of the neural network's parameters $\theta$ for velocity and pressure respectively. Define the objective function
\begin{equation}\label{7}
\begin{split}
J(\overline{U})&=\Big\|\mathcal{G}[\overline{U}](x;\theta)-\mathcal{G}[\overline{u}](x)\Big\|_{0, \Omega, \omega_{1}}^{2}+\Big\|\nabla\cdot {U}(x;\theta_1)\Big\|_{0, \Omega,\omega_{1}}^{2}\\
&+\Big\|U(x;\theta_1)-g(x)\Big\|_{0, \partial\Omega, \omega_{2}}^{2}.
\end{split}
\end{equation}

It should be noted that $J(\overline{U})$ can measure how well the approximate solution satisfies differential operator, divergence condition and boundary condition. Notice that
$$\big\| f(y)\big\|_{0,\mathcal{Y},\omega}=\int_{\mathcal{Y}}\big| f(y)\big|^{2}\omega(y)dy,$$
where $\omega(y)$ is the probability density of $y$ in $\mathcal{Y}$.

Our goal is to find the parameters $\theta$ such that $\overline{U}$ minimizes the objective function $J(\overline{U})$. Especially, if $J(\overline{U})=0$ then $\overline{U}$ is the solution to the general Stokes equations (\ref{stokes-1})-(\ref{stokes-3}). However, it is computationally infeasible to estimate $\theta$ by directly minimizing $J(\overline{U})$ when integrated over a higher dimensional region. Here, we apply a sequence of random sampled points from $\Omega$ and $\partial\Omega$ to avoid forming mesh grid. The algorithm of the DGM for the general Stokes equations are presented as Algorithm 1.

\begin{algorithm}
\caption{Deep Learning Galerkin Method}
\KwIn{$s_{n}=(x_{n},r_{n})$, Max Iterations $M$,  learning rate $\alpha_n$ }
\KwOut{$\theta_{n+1}$}
Randomly generated sample points $s_{n}=(x_{n},r_{n})$; \\
Initialize the parameters  $\theta$\;
\While{iterations $\leq M$}{
read current;
      \begin{align*}
  G(\theta_{n},s_{n})&=\Big(\mathcal{G}[\overline{U}](x_{n};\theta)\Big)^{2}\\
  &+\big(\nabla\cdot U(x_{n};\theta_1)\big)^{2}+\Big(U(r_{n};\theta_1)-g(x)\Big)^{2}
  \end{align*}\\
 \textbf{ and}\\
   $$\theta_{n+1}=\theta_{n}-\alpha_{n}\nabla _{\theta}G(\theta_{n},s_{n}).$$\\
    \eIf{$\underset{n\rightarrow\infty}{\lim}\| \nabla_{\theta}G(\theta_{n},s_{n})\|=0$}{
        return the parameters $\theta_{n+1}$\;
    }{
        go back to the beginning of current section\;
    }
}
\end{algorithm}

In this process, the ``learning rate" $\alpha_{n}\in(0,1)$ decreases as $n\rightarrow\infty$. The term $\nabla_{\theta}G(\theta_{n},s_{n})$ is unbiased estimate of $\nabla_{\theta}J\big(\overline{U}(\cdot;\theta_{n})\big)$ because we can estimate the population parameters by sample mathematical expectations such as
\begin{equation}\label{ppp}
\mathbb{E}\Big[\nabla_{\theta}G(\theta_{n},s_{n})\mid \theta_{n}\Big]=\nabla_{\theta}J\big(\overline{U}(\cdot;\theta_{n})\big).
\end{equation}

In order to illustrate more vividly, the flowchart displayed in Figure \ref{Fig.2}.

\section{Convergence}
\vspace{0.3cm} Undoubtedly, the objective function $J(\overline{U})$ can measure how well the neural network $\overline{U}$ satisfies the differential operator, boundary condition and divergence condition. As we known from \cite{33}, if there is only one hidden layer and one output, then the set of all functions implemented by such a network with $m$ and $n$ hidden units for velocity and pressure are
\begin{equation}\nonumber
\begin{split}
&[\mathfrak{C}_{u}^{\mathrm{m}}(\varphi)]^{d}\\
&=\Big\{\Phi(x): \mathbb{R^{\mathrm{d}}\mapsto\mathbb{R^{\mathrm{d}}}\Big|}\Phi_{\ell}(x)={\sum\limits_{i=1}^{m}}\beta_{i}\varphi\big({\sum\limits_{j=1}^{d}}\sigma_{ji}x_{j}+c_{i}\big)\Big\},
\end{split}
\end{equation}
and
\begin{equation}\nonumber
\begin{split}
\mathfrak{C}_{p}^{\mathrm{n}}(\psi)=
\Big\{\Psi(x): \mathbb{R^{\mathrm{d}}\mapsto\mathbb{R}\Big|}\Psi(x)={\sum\limits_{i=1}^{n}}\beta'_{i}\psi\big({\sum\limits_{j=1}^{d}}\sigma'_{ji}x_{j}+c'_{i}\big)\Big\},\\
\end{split}
\end{equation}
where $\ell=1,2,\ldots,d, \ \Phi(x)=\big(\Phi_1(x),\Phi_2(x),\cdots ,\Phi_d(x)\big)$, $\varphi$ and $\psi$ are the shared activation functions of the hidden units in $\mathcal{C}^2(\Omega)$, bounded and non-constant. $x_j$ is input, $\beta_i,\beta'_i,\sigma_{ji}$ and $\sigma'_{ji}$ are weights, $c_i$ and $c'_i$ are thresholds of neural network.

More generally, we still use the similar notation
$$[\mathfrak{C}_{u}(\varphi)]^{d}\times\mathfrak{C}_{p}(\psi)$$
for the multilayer neural networks with an arbitrarily large number of hidden units $m$ and $n$ respectively. In particular, we use the same parameters $\beta_i,\sigma_{ji},c_i$ and common activation function $\varphi$ in each dimension of $[\mathfrak{C}_{u}^m(\varphi)]^{d}$. The parameters can be written as follows
\begin{equation}\label{9}
\begin{aligned}
\theta_{1}^{\ell}&=(\beta_{1},\cdots,\beta_{m},\sigma_{11},\cdots,\sigma_{dm},c_{1},\cdots,c_{m}),\\
\theta_{2}&=(\beta'_{1},\cdots,\beta'_{n},\sigma'_{11},\cdots,\sigma'_{dn},c'_{1},\cdots,c'_{n}),
\end{aligned}
\end{equation}
where $\ell=1,2,\ldots,d, \theta_{1}\in \mathbb{R}^{(2+d)md}$ and $\theta_{2}\in \mathbb{R}^{(2+d)n}$.

In this section, we show that the neural network $\overline{U}^n$ with $n$ hidden units for $U$ and $P$ satisfies the differential operator, boundary condition and divergence condition arbitrarily well for sufficiently large $n$. Moreover, we prove that there exists $\overline{U}^n\in[\mathfrak{C}_{u}^n(\varphi)]^{d}\times\mathfrak{C}_{p}^n(\psi)$ such that $J(\overline{U}^n)\rightarrow0$ as $n\rightarrow\infty$. Another significant consideration, we give the convergence of $\overline{U}^{n}\rightarrow\overline{u}$ as $n\rightarrow\infty$ where $\overline{u}$ is the exact solution to the general Stokes equations (\ref{stokes-1})-(\ref{stokes-3}).
\begin{figure}[ht]
\centering
\includegraphics[scale=0.4]{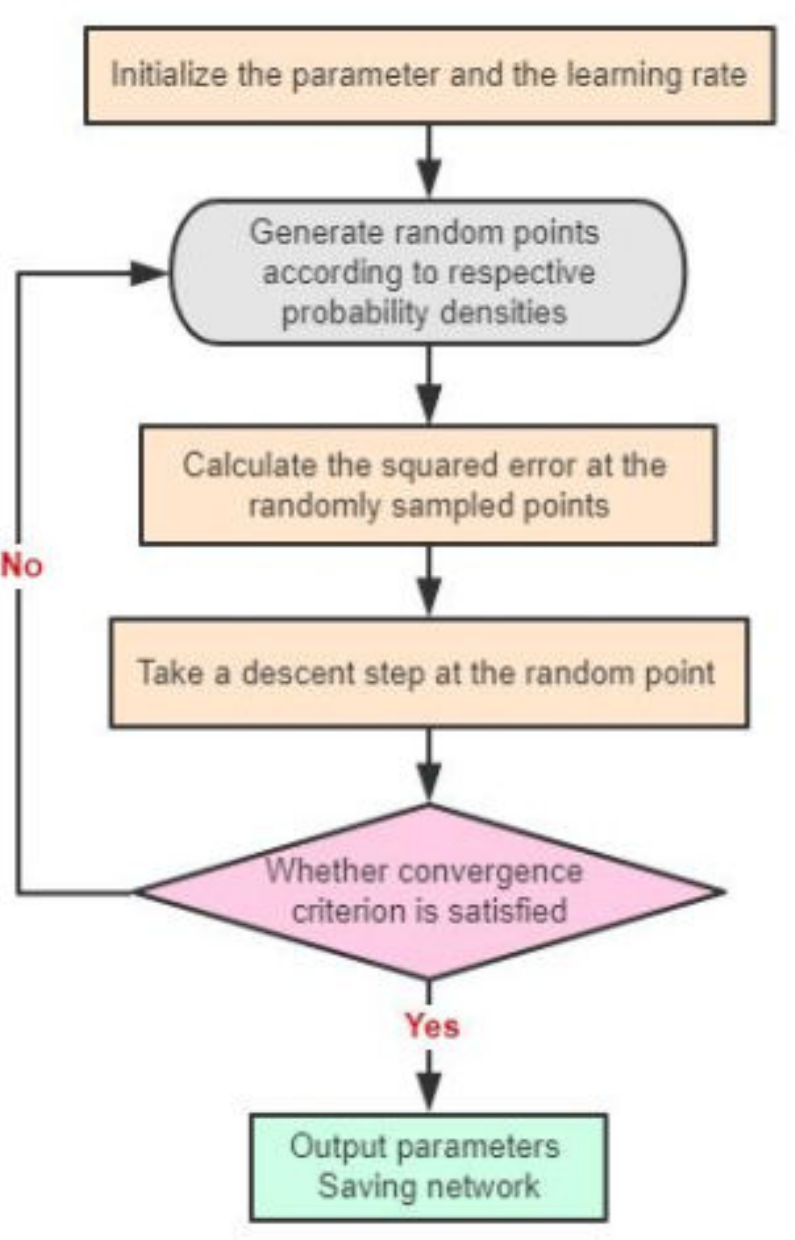}
\caption{Flowchart of the DGM for the general Stokes equations.}
\label{Fig.2}
\end{figure}
\subsection{Convergence of the objective function $J(\overline{U})$}
A particularly important processing, we use the multilayer feed forward networks $\overline{U}$ to universally approximate the solution to the general Stokes equations. Certainly, the neural network $\overline{U}$ can make the objective function $J(\overline{U})$ arbitrarily small. Thus, using the results of \cite{33} and the following lemma, we obtain the convergence of the objective function $J(\overline{U})$. First, we give the following assumption.
\begin{lemma}\label{lemma2}
Assume that $\nabla u(x),\triangle u(x)$ and $\nabla p(x)$ are locally Lipschitz with Lipschitz coefficient that they have at most polynomial growth on $u(x)$ and $p(x)$. Then, for some constants $0\leq q_{i}\leq\infty(i=1,2,3,4)$ we have
\begin{equation}\label{18}
\mid\triangle U-\triangle u\mid\leq(\mid\nabla U\mid^{q_{1}/2}+\mid\nabla u\mid^{q_{2}/2})\mid\nabla U-\nabla u\mid,
\end{equation}
\begin{equation}\label{19}
\mid\nabla P-\nabla p\mid\leq(\mid P\mid^{q_{3}/2}+\mid p\mid^{q_{4}/2})\mid P-p\mid.
\end{equation}
\end{lemma}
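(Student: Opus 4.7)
The plan is to treat both (\ref{18}) and (\ref{19}) as direct consequences of the hypothesis, by unpacking what ``locally Lipschitz with polynomially growing Lipschitz coefficient'' provides in each case. Since the two inequalities are structurally identical (only the roles of the operators and arguments differ), I would set up the argument once for the Laplacian and then repeat it verbatim for the gradient of the pressure.

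First, for (\ref{18}): the assumption that $\Delta u$ is locally Lipschitz in its first-order data means that, for any $U,u$ whose gradients lie in a bounded neighbourhood, there is a bound of the form
\[
|\Delta U - \Delta u| \leq L(\nabla U,\nabla u)\,|\nabla U - \nabla u|,
\]
with $L(\nabla U,\nabla u)$ dominated by a polynomial expression in $|\nabla U|$ and $|\nabla u|$. I would then split the polynomial upper bound additively, using the elementary inequality $(a+b)^{q}\lesssim a^{q}+b^{q}$ for $a,b\geq 0$ together with Young's inequality, to produce nonnegative exponents $q_{1},q_{2}$ with
\[
L(\nabla U,\nabla u) \leq |\nabla U|^{q_{1}/2}+|\nabla u|^{q_{2}/2}.
\]
Absorbing any harmless multiplicative constant into the exponents and substituting gives exactly (\ref{18}).

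Second, for (\ref{19}), I would apply the identical reasoning to the locally Lipschitz map $p\mapsto\nabla p$, yielding
\[
|\nabla P - \nabla p| \leq \widetilde L(P,p)\,|P - p|,
\]
and then dominating $\widetilde L(P,p)$ by $|P|^{q_{3}/2}+|p|^{q_{4}/2}$ via the same polynomial-growth manipulation. This delivers (\ref{19}).

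The main obstacle, in my view, is not a deep analytic difficulty but the precise bookkeeping of the polynomial-growth assumption: one has to justify that the upper bound for the Lipschitz coefficient really splits into the additive sum $|\cdot|^{q_{i}/2}+|\cdot|^{q_{j}/2}$ displayed in the conclusion, rather than appearing as a single mixed term such as $(|\cdot|+|\cdot|)^{q}$. This is resolved by $a^{q}+b^{q}\geq (a+b)^{q}/2^{q-1}$ (for $q\geq 1$) or its companion for $q\in[0,1]$, at the price of possibly redefining $q_{1},\dots,q_{4}$. Once this translation is carried out, the lemma reduces to a direct rewriting of the hypothesis.
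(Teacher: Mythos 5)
Your overall reading of the lemma is the right one, but you should know that the paper does not actually prove it: the authors introduce it with the words ``First, we give the following assumption,'' and no proof follows anywhere in the text. Inequalities (\ref{18})--(\ref{19}) are the quantitative \emph{definition} of the verbal hypothesis ``locally Lipschitz with Lipschitz coefficient of at most polynomial growth,'' taken essentially verbatim from the corresponding assumption in Sirignano and Spiliopoulos \cite{32}, where the analogous display is postulated rather than derived. So your core claim --- that the lemma reduces to unpacking the hypothesis --- coincides with the paper's treatment, and to that extent your proposal is as complete as the source itself.

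Two steps in your write-up, however, would not survive scrutiny if the lemma were a genuine theorem. First, ``absorbing any harmless multiplicative constant into the exponents'' is false as a pointwise statement: for $0\le a<1$ the quantity $a^{q}$ \emph{decreases} as $q$ increases, so $C\,a^{q}$ with $C>1$ cannot be dominated by $a^{q'}$ uniformly in $a$ by any choice of $q'$. The honest fix is to keep a generic multiplicative constant, or to exploit that $q_i=0$ is permitted so that an additive term $|\cdot|^{0}=1$ can absorb bounded contributions; either way the discrepancy is harmless downstream, since Theorem \ref{thm1} only invokes the lemma inside bounds of the form $C\epsilon^{2}$ with an unspecified constant. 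Second, and more substantively, your derivation of (\ref{19}) from ``the locally Lipschitz map $p\mapsto\nabla p$'' suggests something that cannot be true: no inequality of the form $|\nabla P-\nabla p|\le \widetilde L(P,p)\,|P-p|$ holds for arbitrary pairs of functions --- take $P-p$ small in amplitude but rapidly oscillating, so the left side is large while the right side is tiny. The same objection applies to (\ref{18}), where differences of second derivatives are not controlled by differences of first derivatives for general $U$, $u$. These displays are therefore assumptions on the particular solution and on the class of network approximants, not consequences of any Lipschitz property of the differentiation operator; a correct account should state them as hypotheses rather than present them as derivable, which is in fact exactly how the paper uses them.
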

\newtheorem{thm}{\bf Theorem}[section]
\begin{thm}\label{thm1}
Under the assumption of Lemma \ref{lemma2}, there exists a neural network $\overline{U}\in[\mathfrak{C}_{u}(\varphi)]^d\times \mathfrak{C}_{p}(\psi)$, satisfying
\begin{equation}\label{th1}
J(\overline{U})\leq C\epsilon, \ \forall \epsilon>0,
\end{equation}
where $C$ depends on the data $\{\Omega, \alpha, \nu, \omega_{1}, \omega_{2}, f\}$.
\end{thm}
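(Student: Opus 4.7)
The plan is to leverage the Hornik-type universal approximation theorem cited as \cite{33} to produce a pair $\overline{U}=(U,P)$ whose components approximate $u$ and $p$ simultaneously with their relevant derivatives, and then to bound each of the three contributions to $J(\overline{U})$ separately. Since $\overline{u}$ solves (\ref{stokes-1})--(\ref{stokes-3}), we have $\nabla\cdot u=0$ on $\Omega$ and $u=g$ on $\partial\Omega$, so the three terms in $J(\overline{U})$ may be rewritten as $\|\mathcal{G}[\overline{U}]-\mathcal{G}[\overline{u}]\|^{2}_{0,\Omega,\omega_{1}}$, $\|\nabla\cdot U-\nabla\cdot u\|^{2}_{0,\Omega,\omega_{1}}$ and $\|U-u\|^{2}_{0,\partial\Omega,\omega_{2}}$. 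By Lemma \ref{lemma1} we already know $u\in[H^{2}(\Omega)]^{d}$ and $p\in H^{1}(\Omega)$, which provides the regularity needed to apply a joint approximation in the required higher-order norms.

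The first step is to fix $\epsilon>0$ and invoke the universal approximation result from \cite{33} twice: once to produce $U\in[\mathfrak{C}_{u}(\varphi)]^{d}$ with $\|U-u\|_{W^{2,s_{1}}(\Omega)}<\epsilon$, and once to produce $P\in\mathfrak{C}_{p}(\psi)$ with $\|P-p\|_{W^{1,s_{2}}(\Omega)}<\epsilon$, for integrability exponents $s_{1},s_{2}$ to be fixed during the Hölder step below. Next I would expand
\[
\mathcal{G}[\overline{U}]-\mathcal{G}[\overline{u}]=\alpha(U-u)-\nu\bigl(\nabla^{2}U-\nabla^{2}u\bigr)+\nabla(P-p),
\]
use $(a+b+c)^{2}\leq 3(a^{2}+b^{2}+c^{2})$, and treat the three resulting pieces. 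The $\alpha(U-u)$ piece is controlled directly by $\|U-u\|_{L^{2}(\Omega,\omega_{1})}^{2}\leq\|\omega_{1}\|_{L^{\infty}}\|U-u\|_{L^{2}}^{2}$. The divergence term is analogous: $\|\nabla\cdot U-\nabla\cdot u\|^{2}_{0,\Omega,\omega_{1}}\leq C\|U-u\|_{W^{1,2}(\Omega)}^{2}$. For the boundary term I would invoke the trace inequality together with the choice $U-u\in[H^{1}(\Omega)]^{d}$ so that $\|U-u\|_{L^{2}(\partial\Omega,\omega_{2})}^{2}\leq C\|U-u\|_{H^{1}(\Omega)}^{2}$. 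Each of these is $\leq C\epsilon^{2}$ by the approximation guarantee.

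The nontrivial pieces are the Laplacian and pressure-gradient terms. Here I would apply Lemma \ref{lemma2}, which yields
\[
|\triangle U-\triangle u|^{2}\leq\bigl(|\nabla U|^{q_{1}/2}+|\nabla u|^{q_{2}/2}\bigr)^{2}|\nabla U-\nabla u|^{2},
\]
and the analogous inequality for $|\nabla P-\nabla p|^{2}$. I would integrate against $\omega_{1}$ and use Hölder's inequality with conjugate exponents $r,s$ (so $1/r+1/s=1$) to split
\[
\int_{\Omega}\!\bigl(|\nabla U|^{q_{1}/2}+|\nabla u|^{q_{2}/2}\bigr)^{2}|\nabla U-\nabla u|^{2}\omega_{1}\,dx
\]
into a moment factor and an approximation factor. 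The moment factor is controlled by $\|\nabla u\|_{L^{q_{1}r}}$ and $\|\nabla U\|_{L^{q_{1}r}}$, both of which are uniformly bounded: the former by Lemma \ref{lemma1} together with Sobolev embedding, and the latter because $\|\nabla U\|_{L^{q_{1}r}}\leq\|\nabla u\|_{L^{q_{1}r}}+\|\nabla(U-u)\|_{L^{q_{1}r}}\leq\|\nabla u\|_{L^{q_{1}r}}+\epsilon$ by the approximation choice with $s_{1}\geq q_{1}r$. The approximation factor $\|\nabla U-\nabla u\|_{L^{2s}(\Omega,\omega_{1})}^{2}$ is made $\leq C\epsilon^{2}$ by choosing $s_{1}\geq 2s$ at the outset. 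The pressure-gradient piece is handled identically with $q_{3},q_{4}$ and $s_{2}$.

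The main obstacle I anticipate is the bookkeeping between the exponents $q_{i}$ appearing in Lemma \ref{lemma2}, the Hölder exponents $r,s$, and the Sobolev integrability indices $s_{1},s_{2}$ requested from \cite{33}; they must be selected in the right order so that the moment factors remain bounded independently of $\epsilon$ while the approximation factor still collapses. Once this bookkeeping is resolved, collecting the three estimates yields a bound of the form $J(\overline{U})\leq C(\Omega,\alpha,\nu,\omega_{1},\omega_{2},f)\,\epsilon^{2}$, and replacing $\epsilon^{2}$ by $\epsilon$ through a redefinition gives the claimed (\ref{th1}).
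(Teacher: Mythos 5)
Your proposal follows the same skeleton as the paper's proof: rewrite the three terms of $J(\overline{U})$ using the fact that $\overline{u}$ solves (\ref{stokes-1})--(\ref{stokes-3}) (so $\mathcal{G}[\overline{u}]=0$, $\nabla\cdot u=0$, $u=g$ on $\partial\Omega$), split $\mathcal{G}[\overline{U}]-\mathcal{G}[\overline{u}]$ into the zeroth-order, Laplacian and pressure-gradient pieces, control the latter two via Lemma \ref{lemma2} combined with H\"older's inequality, and conclude $J(\overline{U})\leq C\epsilon^{2}$. The divergence is in which form of Hornik's theorem \cite{33} you invoke: the paper uses that networks are uniformly $2$-dense on compacts of $\mathcal{C}^{2}(\bar{\Omega})\times\mathcal{C}^{1}(\bar{\Omega})$, i.e.\ sup-norm closeness $\max_{|a|\leq 2}\sup_{x\in\Omega}|\partial_{x}^{a}U-\partial_{x}^{a}u|<\epsilon$ and $\sup_{x\in\Omega}|P-p|<\epsilon$, whereas you request closeness in $W^{2,s_{1}}\times W^{1,s_{2}}$ for exponents $s_{1},s_{2}$ chosen after the H\"older step.

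This substitution opens a genuine gap exactly at the point you flag as ``bookkeeping.'' You justify the Sobolev-norm approximation by saying Lemma \ref{lemma1} gives $u\in[H^{2}(\Omega)]^{d}$, $p\in H^{1}(\Omega)$, and that this ``provides the regularity needed.'' It does not: density of networks in a Sobolev space only lets you approximate targets lying in that space, and $H^{2}(\Omega)\not\subset W^{2,s_{1}}(\Omega)$ for $s_{1}>2$, while your own constraints force $s_{1}\geq\max(2s,\,q_{1}r)>2$ (since $r,s$ are conjugate, $2s>2$). Worse, Lemma \ref{lemma2} permits the growth exponents to be arbitrarily large ($0\leq q_{i}\leq\infty$), so the moment factors $\|\nabla u\|_{L^{q_{1}r}}$ and the analogous pressure moments cannot in general be rendered finite by $H^{2}\times H^{1}$ regularity plus Sobolev embedding (in 3D that yields at best $\nabla u\in L^{6}$), and no admissible conjugate pair $(r,s)$ need exist; your fallback of shrinking $s$ toward $1$ only inflates $r$ and makes the moment requirement worse. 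The paper sidesteps all of this by working with $\overline{u}\in\mathcal{C}^{2}(\bar{\Omega})\times\mathcal{C}^{1}(\bar{\Omega})$ on the compact set $\bar{\Omega}$: the uniform bounds make every moment factor finite for any exponents (e.g.\ $|\nabla U|\leq|\nabla u|+\epsilon$ pointwise), and the boundary term $\int_{\partial\Omega}|U-u|^{2}\,d\omega_{2}\leq C\epsilon^{2}$ is immediate from the pointwise bound, so no trace inequality is needed. To repair your route you must either restrict the $q_{i}$ and the dimension so the integrability indices line up, or assume classical regularity of $\overline{u}$ and revert to the uniform version of the approximation theorem, as the paper does. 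A minor further remark: for the pressure, Lemma \ref{lemma2} converts $|\nabla P-\nabla p|$ into $|P-p|$, so zeroth-order closeness of $P$ suffices and your $W^{1,s_{2}}$ request is stronger than necessary.
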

\begin{proof}
By Theorem 3 of \cite{33}, we can conclude that there exists $\overline{U}\in[\mathfrak{C}_{u}(\varphi)]^{d}\times\mathfrak{C}_{p}(\psi)$ which are uniformly 2-dense on compacts of $\mathcal{C} ^{2}(\bar{\Omega})\times\mathcal{C} ^{1}(\bar{\Omega})$. It means that for $\overline{u}\in \mathcal{C} ^{2}(\bar{\Omega})\times\mathcal{C} ^{1}(\bar{\Omega}), \forall\epsilon >0$, it follows that
\begin{equation}\label{16}
\underset{a\leq2}{max}\underset{x\in\Omega}{sup}\mid \partial_{x}^{a}U(x)-\partial_{x}^{a}u(x)\mid<\epsilon,
\end{equation}
\begin{equation}\label{17}
\underset{x\in\Omega}{sup}\mid P(x)-p(x)\mid<\epsilon.
\end{equation}

According to the Lemma \ref{lemma2}, using the H$\ddot{o}$lder inequality and Young inequality, setting $r_{1}$ and $r_{2}$ are conjugate numbers such that $\frac{1}{r_{1}}+\frac{1}{r_{2}}=1$, we find that
\begin{equation}
\begin{aligned}\label{20}
&\int_{\Omega}\mid\triangle U-\triangle u\mid^{2}d\omega_{1}(x) \\
\leq  &\int_{\Omega}\Big(\mid\nabla U\mid^{q_{1}}+\mid\nabla u\mid^{q_{2}}\Big)\Big(\nabla U-\nabla u\Big)^{2}d\omega_{1}(x)\\
\leq  &\Big[\int_{\Omega}\Big(\mid\nabla U\mid^{q_{1}}+\mid\nabla u\mid^{q_{2}}\Big)^{r_{1}}d\omega_{1}(x)\Big]^{1/r_{1}}\\
&\times\Big[\int_{\Omega}\Big(\nabla U-\nabla u\Big)^{2r_{2}}d\omega_{1}(x)\Big]^{1/r_{2}} \\
\leq  &\Big[\int_{\Omega}\Big(\mid\nabla U-\nabla u\mid^{q_{1}}+\mid\nabla u\mid^{q_{1}\vee q_{2}}\Big)^{r_{1}}d\omega_{1}(x)\Big]^{1/r_{1}}\\
&\times\Big[\int_{\Omega}\Big(\nabla U-\nabla u\Big)^{2r_{2}}d\omega_{1}(x)\Big]^{1/r_{2}}\\
\leq  &C\epsilon^{2},
\end{aligned}
\end{equation}
where $q_{1}\vee q_{2}=max\{q_{1}, q_{2}\}$.

Similarly,
\begin{equation}
\begin{aligned}\label{21}
&\int_{\Omega}\mid\nabla P-\nabla p\mid^{2}d\omega_{1}(x) \\
\leq  &\int_{\Omega}\Big(\mid P\mid^{q_{3}}+\mid p\mid^{q_{4}}\Big)\Big(P-p\Big)^{2}d\omega_{1}(x)\\
\leq  &\Big[\int_{\Omega}\Big(\mid P\mid^{q_{3}}+\mid p\mid^{q_{4}}\Big)^{r_{3}}d\omega_{1}(x)\Big]^{1/r_{3}}\\
&\times\Big[\int_{\Omega}
(P-p)\mid^{2r_{4}}d\omega_{1}(x)\Big]^{1/r_{4}}\\
\leq  &\Big[\int_{\Omega}\Big(\mid P-p\mid^{q_{3}}+\mid p\mid^{q_{3}\vee q_{4}}\Big)^{r_{3}}d\omega_{1}(x)\Big]^{1/r_{3}}\\
&\times\Big[\int_{\Omega}
\Big(P-p\Big)^{2r_{4}}d\omega_{1}(x)\Big]^{1/r_{4}}\\
\leq &C\epsilon^{2},
\end{aligned}
\end{equation}
where $\frac{1}{r_{3}}+\frac{1}{r_{4}}=1$ and $q_{3}\vee q_{4}=max\{q_{3}, q_{4}\}$.

For the boundary condition, we have
\begin{align}\label{22}
&\int_{\partial\Omega}\mid U-u\mid^{2}d\omega_{2}(x)\leq C\epsilon^{2}.
\end{align}

Thanks to (\ref{20})-(\ref{22}), we obtain
\begin{equation}
\begin{aligned}\label{23}
J(\overline{U})=&\big\|\mathcal{G}[\overline{U}](x;\theta)-\mathcal{G}[\overline{u}](x)\big\|_{\Omega,\omega_{1}}^{2}\\
&+\big\|\nabla\cdot {U}(x;\theta)\big\|_{\Omega,\omega_{1}}^{2}+\big\|U(x;\theta)-g(x)\big\|_{\partial\Omega,\omega_{2}}^{2}\\
=&\big\|\mathcal{G}[\overline{U}](x;\theta)\big\|_{\Omega,\omega_{1}}^{2}\\
&+\big\|\nabla\cdot {U}(x;\theta)\big\|_{\Omega,\omega_{1}}^{2}+\big\|U(x;\theta)-g(x)\big\|_{\partial\Omega,\omega_{2}}^{2}\\
=&\int_{\Omega}\mid\triangle U-\triangle u\mid^{2}d\omega_{1}(x)+\int_{\Omega}\mid\nabla P-\nabla p\mid^{2}d\omega_{1}(x)\\
&+\int_{\Omega}\mid\alpha U-\alpha u\mid^{2}d\omega_{1}(x)+\int_{\Omega}\mid\nabla\cdot u\mid^{2}d\omega_{1}(x)\\
&+\int_{\Omega}\mid\nabla\cdot( U-u)\mid^{2}d\omega_{1}(x)+\int_{\partial\Omega}\mid U-u\mid^{2}d\omega_{2}(x)\\
\leq  &C\epsilon^{2},
\end{aligned}
\end{equation}
which implies (\ref{th1}).
\end{proof}
\subsection{Convergence of the neural network to the general Stokes solution}
We have discussed the convergence of the objective function $J(\overline{U})$ in the last subsection. Next we give the convergence of the neural network $\overline{U}^n$ to the exact solution $\overline{u}$ for the general Stokes equations with homogeneous boundary condition
\begin{align}
\alpha u-\nu\nabla^{2}u+\nabla p&=f,\ \ \ \textrm{in}\ \Omega,\label{hstokes-1}\\
\nabla\cdot u&=0,\ \ \ \textrm{in}\ \Omega,\label{hstokes-2}\\
u&=0,\ \ \ \textrm{on}\ \partial\Omega.\label{hstokes-3}
\end{align}

Recall the form of the objective function with
$$J(\overline{U})=\|\mathcal{G}[\overline{U}]\|_{0,\Omega}^{2}+\|\nabla\cdot {U}\|_{0,\Omega}^{2}+\|U\|_{0,\partial\Omega}^{2}. $$

By Theorem \ref{thm1}, we obtain
$$ J(\overline{U}^{n})\rightarrow 0~~\text{as}~~n\rightarrow \infty. $$

Furthermore, each neural network $\overline{U}^{n}=(U^n,P^n)$ satisfies the following equations
\begin{align}
\mathcal{G}[\overline{U}^{n}]&=h^{n},\ \ \textrm{in}\ \Omega,\label{ustokes-1}\\
\nabla\cdot {U}^{n}&=0,\ \ \ \ \textrm{in}\ \Omega,\label{ustokes-2}\\
{U}^{n}&=g^{n},\ \ \textrm{on}\ \partial\Omega,\label{ustokes-3}
\end{align}
for some $h^{n}$ and $g^{n}$ such that
\begin{equation}\label{25}
\begin{aligned}
\|h^{n}\|_{0,\O}^{2}+\|g^{n}\|_{0,\p\O}^{2} \rightarrow 0~~\text{as}~~n\rightarrow \infty.
\end{aligned}
\end{equation}

In this subsection, we do not explore more discussions on inhomogeneous problems since the inhomogeneous problems can be solved by the corresponding homogeneous method (See Section 4 of Chapter V in \cite{34} or Chapter 8 of \cite{35} for details). For convenience, we provide a theorem to guarantee the convergence of the neural network $\overline{U}^n$ and the exact solution $\overline{u}$ to the equations (\ref{hstokes-1})-(\ref{hstokes-3}).

\begin{thm}\label{thm2}
Under the assumptions of Lemma \ref{lemma2}, Theorem \ref{thm1} and (\ref{25}), the neural network $U^{n}$ can converge strongly to $u$ in $L^{2}(\Omega)$, and the $P^{n}$ converges strongly to $p$ in $H^{-1}(\Omega)$. In addition, if the sequences $\{U^{n}\}_{n\in\mathbb{N}}$ and $\{P^{n}\}_{n\in\mathbb{N}}$ are uniformly bounded and equicontinuous in $\Omega$, they can converge to $u$ and $p$ uniformly in $\Omega$ respectively.
\end{thm}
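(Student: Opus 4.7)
The plan is to transfer the question to the error system and exploit dual Stokes regularity from Lemma~\ref{lemma1}. Setting $e^{n} = U^{n}-u$ and $q^{n} = P^{n}-p$ and subtracting (\ref{hstokes-1})--(\ref{hstokes-3}) from (\ref{ustokes-1})--(\ref{ustokes-3}), the pair $(e^{n}, q^{n})$ satisfies the perturbed Stokes system with source $h^{n}$, divergence-free condition, and boundary data $g^{n}$. By (\ref{25}) both $h^{n}$ and $g^{n}$ tend to zero, but only in $L^{2}$; this weak boundary control is exactly what prevents a direct application of Lemma~\ref{lemma1} to $(e^{n},q^{n})$ and is the chief source of difficulty in what follows.

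To establish $U^{n}\to u$ strongly in $L^{2}(\Omega)$, I would argue by an Aubin--Nitsche-style duality. Given an arbitrary $\psi\in[L^{2}(\Omega)]^{d}$, consider the dual Stokes problem
$$\alpha\Phi-\nu\Delta\Phi+\nabla\Pi=\psi,\quad \nabla\cdot\Phi=0\ \text{in }\Omega,\quad \Phi|_{\partial\Omega}=0,$$
which by Lemma~\ref{lemma1} admits a unique solution with $\|\Phi\|_{2}+\|\Pi\|_{1}\leq C\|\psi\|_{0}$. Testing the $e^{n}$-equation against $\Phi$ and integrating by parts twice moves all derivatives onto the smooth test pair; the conditions $\nabla\cdot\Phi=0$ and $\Phi|_{\partial\Omega}=0$ kill the interior divergence and pressure contributions, while the surviving boundary integrals pair $g^{n}$ against traces of $\partial_{n}\Phi$ and $\Pi$, both dominated in $L^{2}(\partial\Omega)$ by $\|\Phi\|_{2}+\|\Pi\|_{1}$ via standard trace inequalities. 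This yields
$$\Bigl|\int_{\Omega}e^{n}\cdot\psi\,dx\Bigr|\leq C\bigl(\|h^{n}\|_{0,\Omega}+\|g^{n}\|_{0,\partial\Omega}\bigr)\|\psi\|_{0},$$
and taking the supremum over unit $\psi$ gives $\|U^{n}-u\|_{0,\Omega}\to 0$.

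For the $H^{-1}$ convergence of $P^{n}$, I would work from the distributional identity $\nabla q^{n}=h^{n}+\nu\Delta e^{n}-\alpha e^{n}$. Regarded as elements of $H^{-2}(\Omega)^{d}$, $\|\Delta e^{n}\|_{H^{-2}}\leq\|e^{n}\|_{0}$, $\|e^{n}\|_{H^{-2}}\leq C\|e^{n}\|_{0}$, and $\|h^{n}\|_{H^{-2}}\leq C\|h^{n}\|_{0}$, so $\|\nabla q^{n}\|_{H^{-2}}\to 0$. Since $q^{n}$ has zero mean (both pressures lie in $L_{0}^{2}$), the negative-norm Ne\v{c}as/de~Rham lemma, $\|q\|_{H^{-1}/\mathbb{R}}\leq C\|\nabla q\|_{H^{-2}}$, upgrades this to $\|P^{n}-p\|_{H^{-1}}\to 0$.

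Finally, the uniform convergence under the extra uniform-boundedness and equicontinuity hypothesis follows from Arzel\`a--Ascoli combined with uniqueness of limits: the sequences $\{U^{n}\}$ and $\{P^{n}\}$ are precompact in $C(\overline{\Omega})$, and every uniformly convergent subsequence must coincide with the already-identified $L^{2}$, resp.\ $H^{-1}$, limit, so the whole sequences converge uniformly to $u$ and $p$. The hardest step is clearly the second: reconciling the $L^{2}$ convergence of $g^{n}$ with the $H^{1/2}$ boundary regularity expected by Stokes theory. The dual-problem trick resolves this by relocating all derivatives onto the smooth test pair $(\Phi,\Pi)$, so that the weak boundary regularity of $g^{n}$ is absorbed by the strong interior regularity of $(\Phi,\Pi)$ through trace inequalities.
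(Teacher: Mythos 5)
Your proposal is correct in substance but takes a genuinely different route from the paper. The paper never forms the error system: it introduces an auxiliary network $\widehat{U}^{n}$ satisfying (\ref{ustokes-1})--(\ref{ustokes-3}) with $g^{n}=0$, obtains the energy bound $\alpha_{0}\|\widehat{U}^{n}\|_{1,\Omega}\leq C\|f\|_{0,\Omega}$ by testing the variational form with $(\widehat{U}^{n},\widehat{P}^{n})$, passes to a weakly convergent subsequence and uses the compact embedding $H^{1}(\Omega)\hookrightarrow L^{2}(\Omega)$ to get $\widehat{U}^{n}\to u$ in $L^{2}$; the gap between $U^{n}$ and $\widehat{U}^{n}$ is then closed via Vitali's theorem (which is why the paper already invokes uniform boundedness and equicontinuity at that stage), and the pressure is treated by a de Rham argument through the functional in (\ref{30})--(\ref{lv}) together with the compact embedding $L^{2}(\Omega)\hookrightarrow H^{-1}(\Omega)$. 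Your transposition (Aubin--Nitsche) argument instead yields the quantitative estimate $\|U^{n}-u\|_{0,\Omega}\leq C\bigl(\|h^{n}\|_{0,\Omega}+\|g^{n}\|_{0,\partial\Omega}\bigr)$, which is strictly more informative: full-sequence convergence with a rate essentially controlled by $J(\overline{U}^{n})^{1/2}$, and no equicontinuity needed for the $L^{2}$ and $H^{-1}$ claims. The price is two ingredients the paper does not use and which you should justify: (i) the $H^{2}\times H^{1}$ dual regularity --- Lemma \ref{lemma1} asserts it, but only under a Lipschitz boundary, for which $H^{2}$ regularity can fail, so convexity or $C^{1,1}$ smoothness should really be assumed; (ii) the negative-norm Ne\v{c}as inequality $\|q\|_{H^{-1}/\mathbb{R}}\leq C\|\nabla q\|_{H^{-2}}$, which is standard (e.g.\ Amrouche--Girault) but nontrivial and needs a citation, together with an explicit zero-mean normalization of $P^{n}$; note also that the compatibility $\int_{\partial\Omega}g^{n}\cdot \overrightarrow{n}\,ds=0$, which makes your $\Pi$-boundary term insensitive to the additive constant in $\Pi$, does follow from $\nabla\cdot e^{n}=0$. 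Your Arzel\`a--Ascoli step for the uniform-convergence claim coincides with the paper's.
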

\begin{proof}
The existence and uniqueness for the solution of (\ref{hstokes-1})-(\ref{hstokes-3}) are proved by the Saddle point theorem (See Lemma \ref{lemma1}). Note that $\widehat{\overline{U}}^{n}$ satisfies the equations (\ref{ustokes-1})-(\ref{ustokes-3}) with $g^{n}=0$. Firstly, we know that the variational formulation of the equations (\ref{ustokes-1})-(\ref{ustokes-3}) is to find $(\widehat{U}^n,\widehat{P}^n)\in [\mathfrak{C}_{u}^{n}(\varphi)]^d\times \mathfrak{C}_{p}^{n}(\psi)$ for $\forall (\widehat{V},\widehat{Q})\in [\mathfrak{C}_{u}^{n}(\varphi)]^d\times \mathfrak{C}_{p}^{n}(\psi)$ such that
\begin{equation}\label{27}
\begin{aligned}
&\alpha(\widehat{U}^{n},\widehat{V})+\nu(\nabla\widehat{U}^{n},\nabla\widehat{V})+(\nabla\widehat{P}^{n},\widehat{V})+(\nabla\cdot\widehat{U}^{n},\widehat{Q})\\
&=(f,\widehat{V})+(h^{n},\widehat{V}),
\end{aligned}
\end{equation}
in addition, we have
\begin{equation}\label{227}
(\nabla\widehat{P}^{n},\widehat{V})=-(\nabla\cdot\widehat{V},\widehat{P}^{n}).
\end{equation}

Taking $\widehat{V}=\widehat{U}^{n}$ and $\widehat{Q}=\widehat{P}^{n}$ in (\ref{27}), it follows that
\begin{equation}\label{28}
\begin{aligned}
\alpha(\widehat{U}^{n},\widehat{U}^{n})+\nu(\nabla\widehat{U}^{n},\nabla\widehat{U}^{n})&=(f,\widehat{U}^{n})+(h^{n},\widehat{U}^{n}).
\end{aligned}
\end{equation}

Using the definition of the $H^{1}$ norm, (\ref{25}) and setting $\alpha_{0}=min\{\alpha, \nu\}$, we obtain
\begin{equation}\label{29}
\begin{aligned}
\alpha_{0}\parallel\widehat{U}^{n}\parallel_{1,\O}&\leq C(\parallel f\parallel_{0,\O}+\parallel h^n\parallel_{0,\O})\\
&\leq C\parallel f\parallel_{0,\O}.
\end{aligned}
\end{equation}

The convergence of $\widehat{\overline{U}}^{n}$ and $\overline{U}^{n}$ is desirable to discuss yet. By using the uniformly boundedness of $\widehat{U}^{n}$, we can extract a subsequence $\{\widehat{U}^{n}\}_{n\in\mathbb{N}}$ of $\widehat{U}^{n}$ which can converge weakly in $H^{1}(\Omega)$. Due to the compact embedding $H^{1}(\Omega)\hookrightarrow L^{2}(\Omega)$,
we have $\underset{n\rightarrow\infty}{\lim}\parallel \widehat{U}^{n}-u\parallel_{0,\O}=0$.

Nevertheless, we remain to discuss $\underset{n\rightarrow\infty}{\lim}\parallel U^{n}-\widehat{U}^{n}\parallel_{0,\O}=0$, where $U^{n}$ and $\widehat{U}^{n}$ satisfy (\ref{ustokes-1})-(\ref{ustokes-3}) with homogeneous and inhomogeneous boundary respectively.  Afterwards, since $\underset{n\rightarrow\infty}{\lim}\|g^{n}\|_{0,\p\O}=0$, $U^{n}$ converges to zero at least along a subsequence on the boundary. Besides, it will be identical with $\widehat{U}^{n}$ almost everywhere. Indeed, define $F_{n}=|U^{n}-\widehat{U}^{n}|^2$. $\big\{ F_{n}(x)\big\}_{n\in\mathbb{N}}$ is uniformly bounded in $L^2(\Omega)$ by the reason of the uniformly boundedness of $\big\{U^{n}\big\}_{n\in\mathbb{N}}$ and $\big\{ \widehat{U}^{n}\big\}_{n\in\mathbb{N}}$. In addition, $\big\{ F_{n}(x)\big\}_{n\in\mathbb{N}}$ can be integrated on domain $\bar{\Omega}$ and converges to zero almost everywhere. By the definition of $F_{n}$, the uniformly boundedness and equicontinuity of $\big\{U^{n}\big\}_{n\in\mathbb{N}}$ and $\big\{\widehat{U}^{n}\big\}_{n\in\mathbb{N}}$, for $\forall x,y\in \bar{\O}$, $\forall\epsilon'>0$, $\exists\delta>0$, if $|x-y|<\delta$, there holds that
\begin{equation}
\begin{aligned}
&\Big| F_{n}(x)- F_{n}(y)\Big|\\
= &\Big|\big|U^{n}(x)-\widehat{U}^{n}(x)\big|^2-\big|U^{n}(y)-\widehat{U}^{n}(y)\big|^2\Big|\\
=& \Big|\big|U^{n}(x)-\widehat{U}^{n}(x)\big|+\big|U^{n}(y)-\widehat{U}^{n}(y)\big|\Big|\\
&\times\Big|\big|U^{n}(x)-\widehat{U}^{n}(x)\big|-\big|U^{n}(y)-\widehat{U}^{n}(y)\big|\Big|\\
\leq & \Big|\big|U^{n}(x)-\widehat{U}^{n}(x)\big|+\big|U^{n}(y)-\widehat{U}^{n}(y)\big|\Big|\\
&\times\Big|\big|U^{n}(x)-\widehat{U}^{n}(x)-U^{n}(y)-\widehat{U}^{n}(y)\big|\Big|\\
\leq & \Big|\big|U^{n}(x)-\widehat{U}^{n}(x)\big|+\big|U^{n}(y)-\widehat{U}^{n}(y)\big|\Big|\\
&\times\Big|\big|U^{n}(x)-U^{n}(y)\big|+\big|\widehat{U}^{n}(x)-\widehat{U}^{n}(y)\big|\Big|\\
<&C\epsilon',
\end{aligned}
\end{equation}
where $\epsilon'>0$ is an arbitrarily small constant. In conclusion, $\big\{ F_{n}(x)\big\}_{n\in\mathbb{N}}$ is equicontinous. Based on the above preparation, we can obtain $\underset{n\rightarrow\infty}{\lim}\parallel U^{n}-\widehat{U}^{n}\parallel_{0,\O}=0$ by using Vitali's theorem. Thus through a triangle inequality there holds that
\begin{equation}
\begin{aligned}
&\underset{n\rightarrow\infty}{\lim}\parallel U^{n}-u\parallel_{0,\O}\\
\leq &\underset{n\rightarrow\infty}{\lim}\parallel U^{n}-\widehat{U}^{n}\parallel_{0,\O}+\underset{n\rightarrow\infty}{\lim}\parallel \widehat{U}^{n}-u\parallel_{0,\O}\\
=&0
\end{aligned}
\end{equation}
since $\{\widehat{U}^{n}\}_{n\in \mathbb{N}}$ strongly converges to $u$ in $L^2(\Omega)$.

In order to study the pressure of the general Stokes problem, we define
\begin{equation}\label{30}
\begin{aligned}
L(\widehat{v})&=(f,\widehat{v})+(h^{n},\widehat{v})-\alpha(\widehat{U}^{n},\widehat{v})-\nu(\nabla\widehat{U}^{n},\nabla\widehat{v})\\
&=0,
\end{aligned}
\end{equation}
where $\widehat{v}\in [\mathfrak{C}_{u}^{n}(\varphi)]^d\cap [H_0^1(\Omega)]^d$. Then
$$<L, \widehat{v}>=0, \ \forall\widehat{v}\in[\mathfrak{C}_{u}^{n}(\varphi)]^d\cap [H_0^1(\Omega)]^d,$$
where $<\cdot,\cdot>$ stands for the duality pairing between $[\mathfrak{C}_{u}^{n}(\varphi)]^d\cap [H_0^1(\Omega)]^d$ and it's dual space.

In addition, there exists $\widehat{P}^{n}\in \mathfrak{C}_{p}^{n}(\psi)\cap L^{2}(\Omega),$ for $\forall \widehat{v}\in[\mathfrak{C}_{u}^{n}(\varphi)]^d\cap [H_0^1(\Omega)]^d$ such that
$$<L, \widehat{v}>=\int_{\Omega}\widehat{P}^{n}\div\widehat{v}dx=-(\widehat{P}^{n},\div\widehat{v})=d(\widehat{v},\widehat{P}^{n}).$$

Namely,
\begin{align}\label{lv}
d(\widehat{v},\widehat{P}^{n})=(f,\widehat{v})+(h^{n},\widehat{v})-\alpha(\widehat{U}^{n},\widehat{v})-\nu(\nabla\widehat{U}^{n},\nabla\widehat{v}).
\end{align}

What's more, as in Theorem $3.3$ of \cite{36}, we find that
$$\nabla\widehat{U}^{n}\rightarrow \nabla u\ \ almost\ \  everywhere\ \ in \ \ \Omega,$$
which concludes that $\widehat{P}^{n}$ can converge weakly to $p$ since $d(\widehat{v},\widehat{P}^{n})\rightharpoonup d(\widehat{v},p)$. Applying the same approach as for the strong convergence of $\{\widehat{U}^{n}\}_{n\in \mathbb{N}}$ to $u$ in $L^{2}(\Omega)$. Consequently, due to the compact embedding $L^{2}(\Omega)\hookrightarrow H^{-1}(\Omega)$, we can obtain
$$\underset{n\rightarrow\infty}{\lim}\parallel \widehat{P}^{n}-p\parallel_{-1,\O}=0.$$

Using a triangle inequality, it follows that
\begin{equation}
\begin{aligned}
 &\underset{n\rightarrow\infty}{\lim}\parallel P^{n}-p\parallel_{-1,\O}\\
\leq &\underset{n\rightarrow\infty}{\lim}\parallel P^{n}-\widehat{P}^{n}\parallel_{-1,\O}+\underset{n\rightarrow\infty}{\lim}\parallel \widehat{P}^{n}-p\parallel_{-1,\O}\\
=&0.
\end{aligned}
\end{equation}

For all these reasons, $\{U^{n}\}_{n\in \mathbb{N}}$ can converge strongly to $u$ in $L^{2}(\Omega)$, $\{P^{n}\}_{n\in \mathbb{N}}$ converges strongly to $p$ in $H^{-1}(\Omega)$. Noting that $\{U^{n}\}_{n\in \mathbb{N}}$ and $\{P^{n}\}_{n\in \mathbb{N}}$ are uniformly bounded and equicontinuous in $\Omega$, we can conclude that   $\{U^{n}\}_{n\in \mathbb{N}}$ and $\{P^{n}\}_{n\in \mathbb{N}}$ converge uniformly to $u$ and $p$ by the well known Arzel$\grave{a}$-Ascoli theorem.
\end{proof}
\section{Numerical Experiments}

In this section, we apply the DGM to solve the general Stokes problems in both 2D and 3D case. The experimental results show the high efficiency and precision of the DGM. Our numerical experiments are based on Tensorflow \cite{37} and the configuration of the computer is 64-bit Intel Xeon Silver 4116 (2 processors). In the numerical simulation, we utilize six different architectures to train the neural network and set the same number of network layers to solve $U$ and $P$ simultaneously. These architectures include one to six hidden layers respectively, and 16 units on each hidden layer (Denoted as ARCH 1-6). We apply ARCH 1-3 in 2D case and apply ARCH 1-6 in 3D case. The datasets in 2D case contain 1000, 2000, 4000 and 8000 samples respectively (See Figure \ref{Fig.5}, denoted as 2D-DS 1-4). And in 3D case, the datasets contain 1200, 2400, 4800 and 9600 samples respectively (See Figure \ref{Fig.3d1}, denoted as 3D-DS 1-4).
\begin{figure}[ht]
\centering
\includegraphics[scale=0.4]{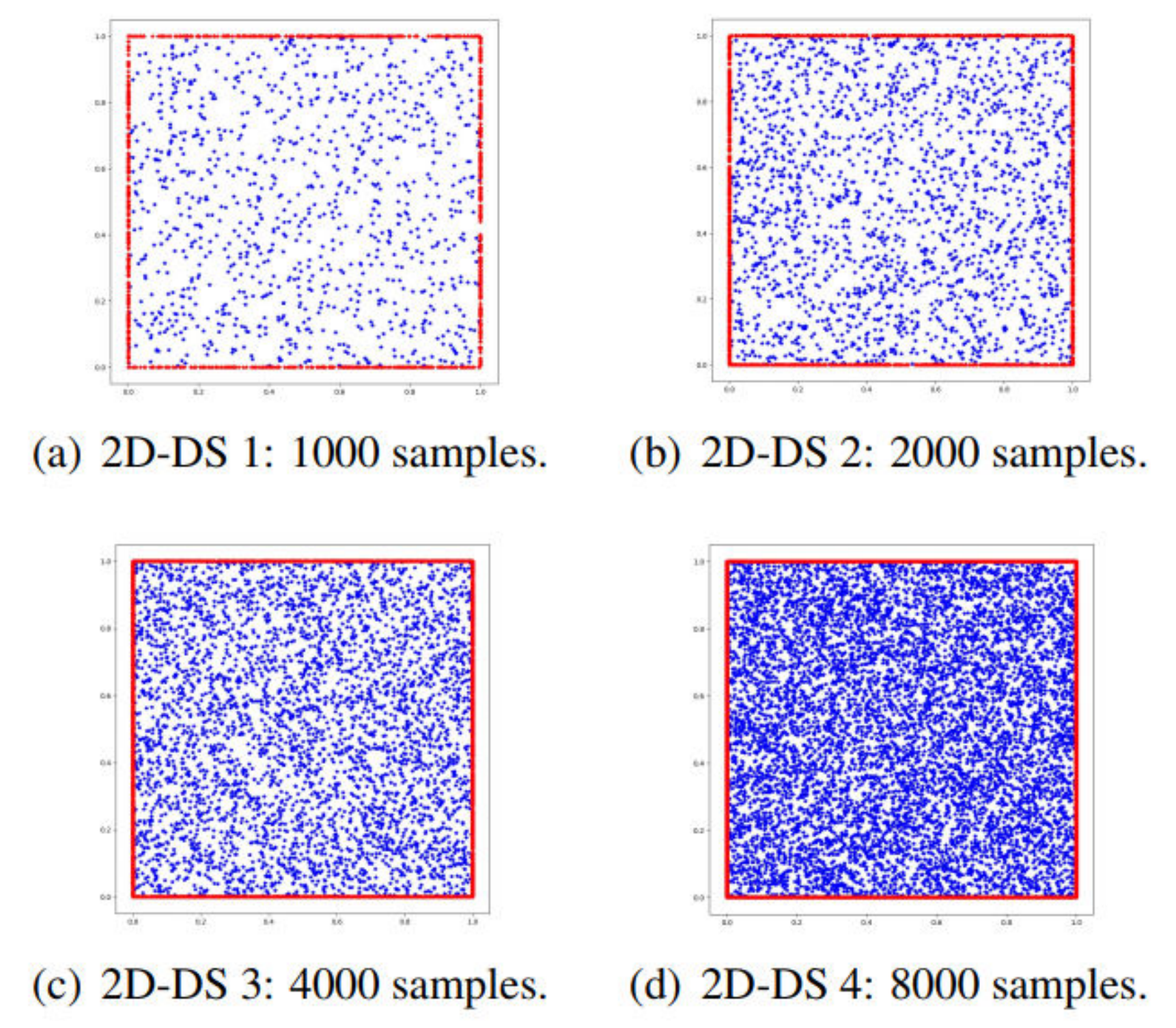}
\caption{The datasets in 2D case.}
\label{Fig.5}
\end{figure}

\begin{figure}[htbp]
\centering
\includegraphics[scale=0.4]{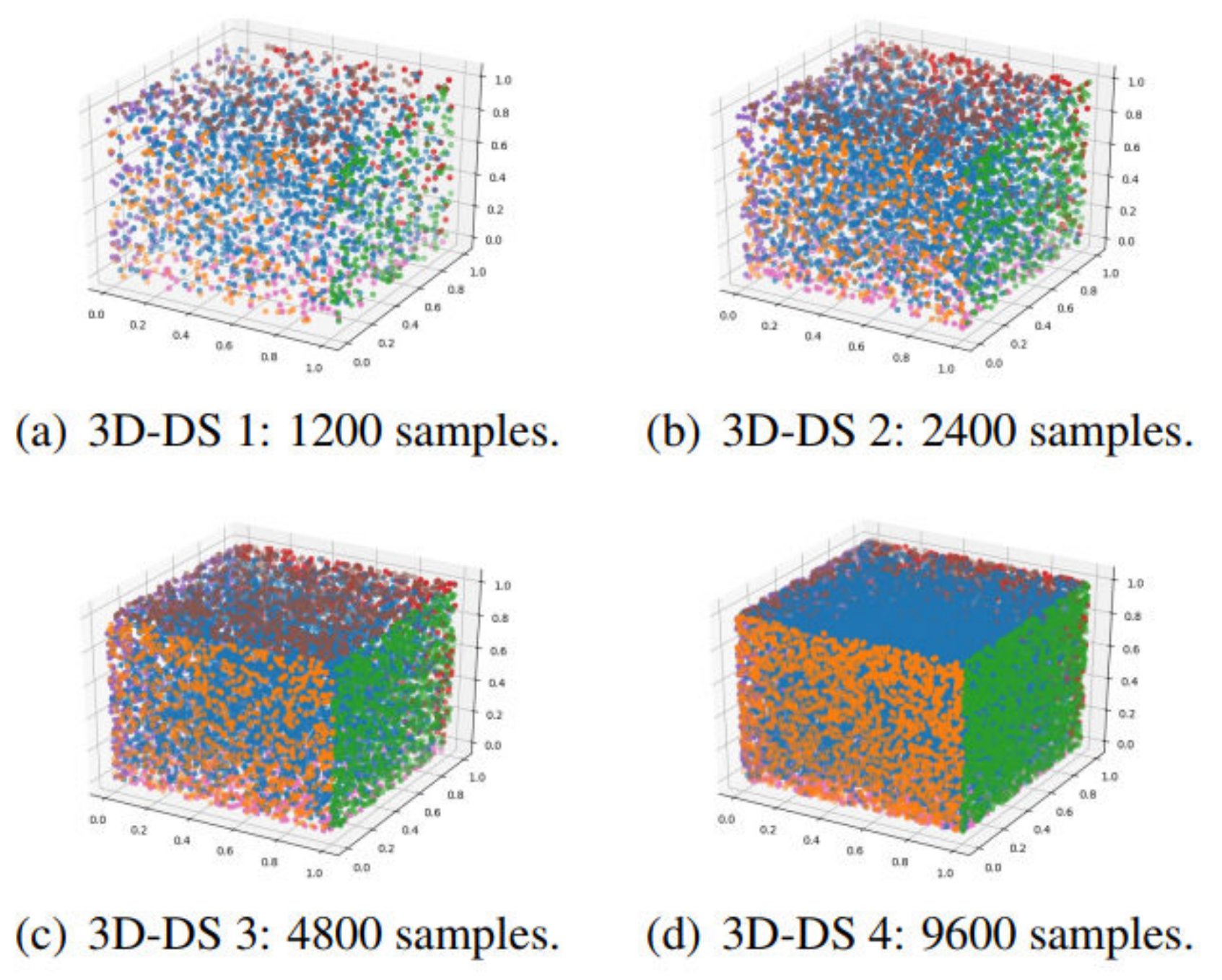}
\caption{The datasets in 3D case.}
\label{Fig.3d1}
\end{figure}
\subsection{The Stokes equations}

In this subsection, we first consider the Stokes equations with homogeneous boundary condition in both 2D and 3D cases. Set $\nu=0.025$ and $\alpha=0$ in equations (\ref{stokes-1}) - (\ref{stokes-3}). Use the following 2D and 3D exact solutions,
\begin{equation}\label{11}
\begin{aligned}
u_{1}(x_{1},x_{2})&=2sin(\pi x_{1})^{2}sin(\pi x_{2})cos(\pi x_{2})\pi,\\
u_{2}(x_{1},x_{2})&=-2sin(\pi x_{1})sin(\pi x_{2})^{2}cos(\pi x_{1})\pi,\\
p(x_{1},x_{2})&=cos(\pi x_{1})cos(\pi x_{2}),
\end{aligned}
\end{equation}
in $\Omega=(0,1)^2$ and
\begin{equation}
\begin{aligned}\label{3dequation1}
&u_{1}(x,y,z)\\&=sin(\pi x)^{2}(sin(2\pi y)sin(\pi z)^2 - sin(\pi y)^2sin(2\pi z)),\\
&u_{2}(x,y,z)\\&=sin(\pi y)^{2}(sin(2\pi z)sin(\pi x)^2 - sin(\pi z)^2sin(2\pi x)),\\
&u_{3}(x,y,z)\\&=sin(\pi z)^{2}(sin(2\pi x)sin(\pi y)^2 - sin(\pi x)^2sin(2\pi y)),\\
&p(x,y,z)=sin(\pi x)sin(\pi y)cos(\pi z),
\end{aligned}
\end{equation}
in $\Omega=(0,1)^3$. Then, the right hands $f(x,y)$ and $f(x,y,z)$ can be determined by equation (\ref{stokes-1}), respectively.

In order to demonstrate the effectiveness and accuracy of the DGM, we put forward the norms as follows
\begin{align}
&errL^{1}=\frac{1}{N}\sum_{i=1}^{N}\mid {U}_{i}-{u}_{i}\mid,\label{error1}\\
&errL^{2}=\frac{1}{N}\sum_{i=1}^{N}\mid {U}_{i}-{u}_{i}\mid^{2},\label{error2}\\
&J(\overline{U})=\frac{1}{N}\sum_{i=1}^{N}\Big[\big|\mathcal{G}[\overline{U_i}]\big|^{2}+\big|\nabla\cdot {U_i}\big|^{2}+|U_i-g_i\big|^{2}\Big],\label{loss error}
\end{align}
where $ \overline{U}_{i}$ and ${u}_{i}$ are the neural network and exact solution on each batch $i=1,2, \cdots N$ of datasets, respectively. For simplicity, we only calculate the error of velocity. The pressure is similar.
\begin{figure}[ht]
\centering
\includegraphics[scale=0.4]{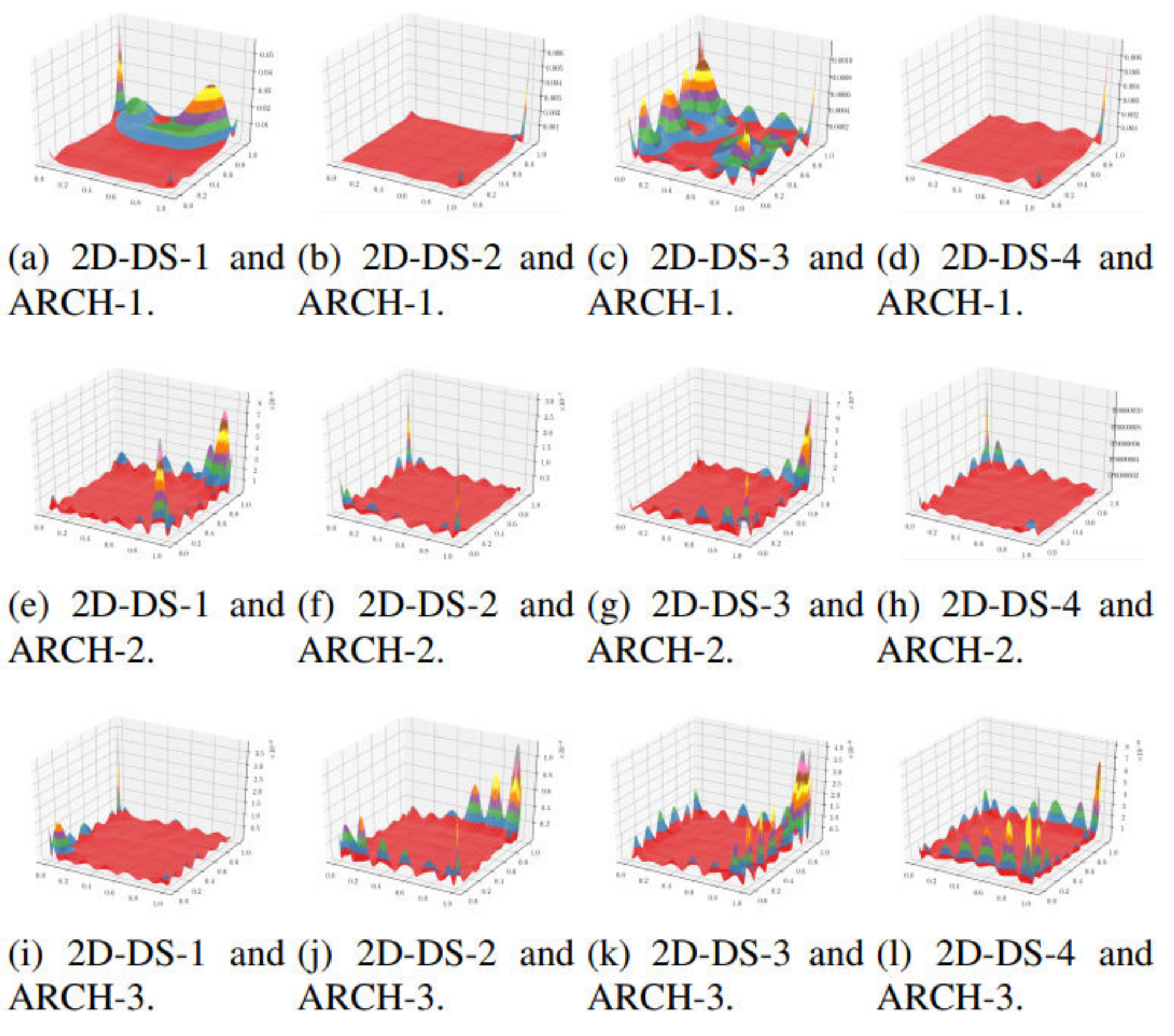}
\caption{The $errL^{2}$ norm of the 2D Stokes equations.}
\label{STOKESL2loss}
\end{figure}

\begin{figure}[ht]
\centering
\includegraphics[scale=0.4]{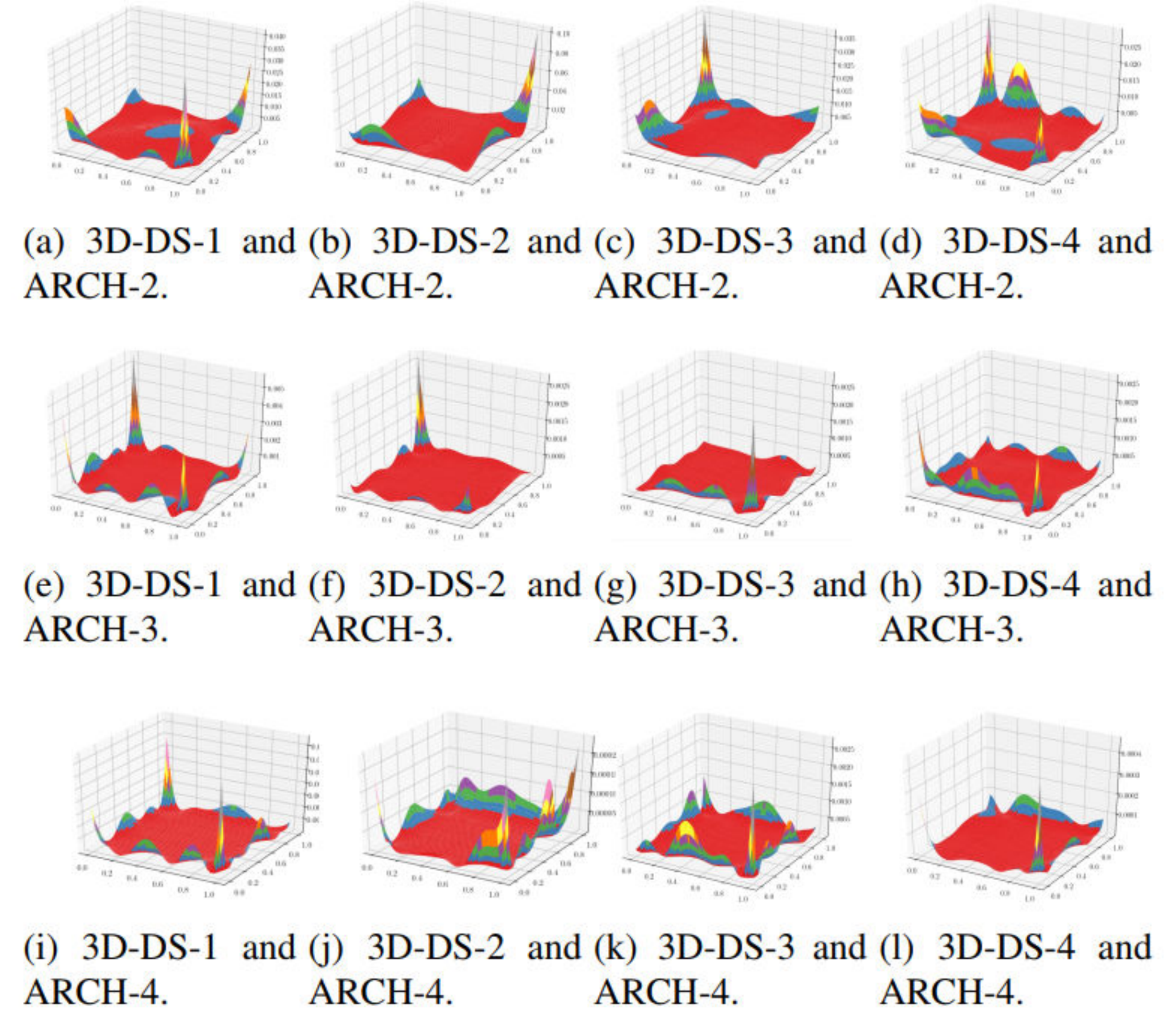}
\caption{The $errL^{2}$ norm of the 3D Stokes equations. }
\label{Fig.3d2}
\end{figure}

\begin{figure}[ht]
\centering
\includegraphics[scale=0.4]{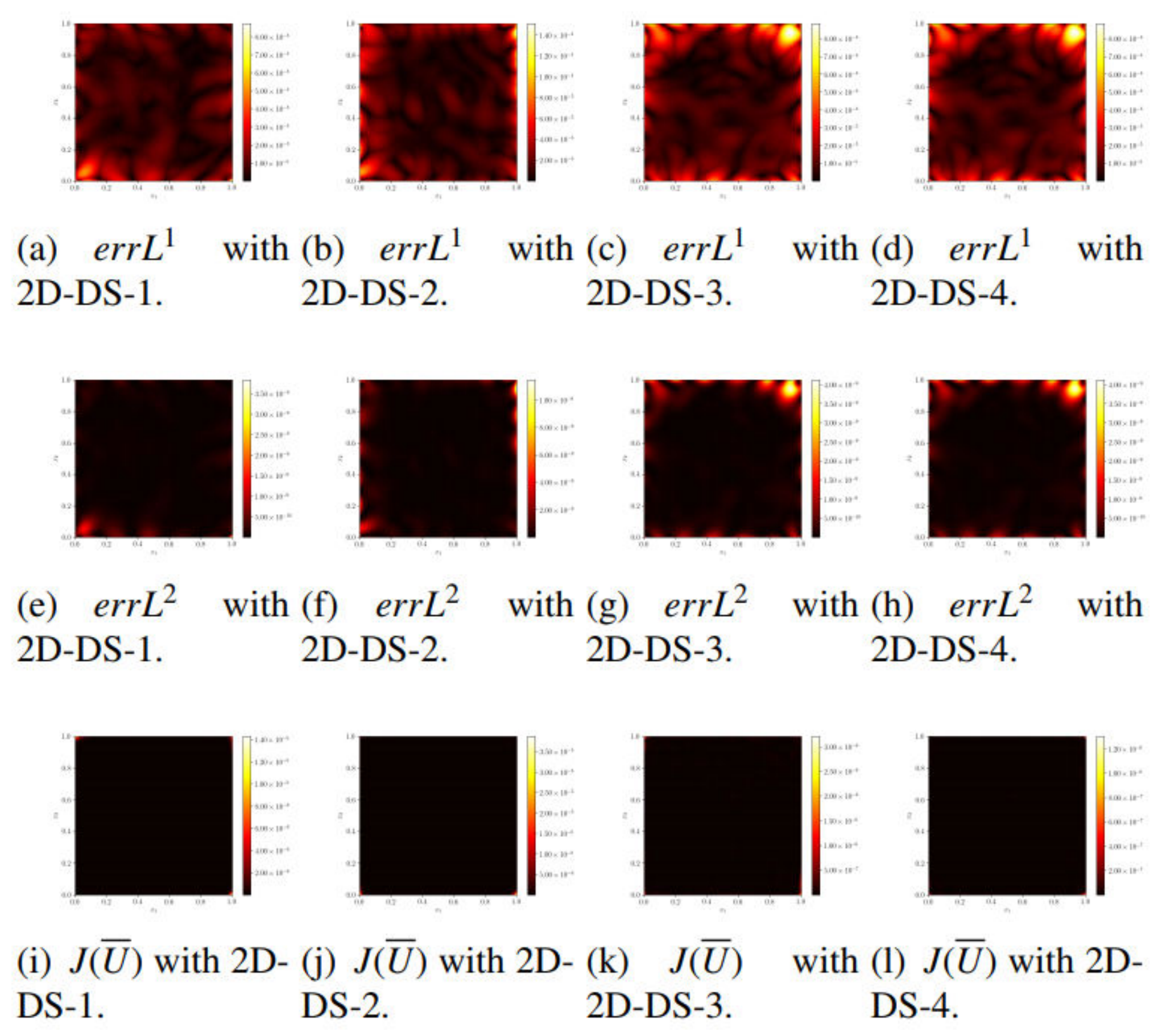}
\caption{The three norms of the 2D Stokes equations by using ARCH-3. }
\label{STOKESA3}
\end{figure}

\begin{table}[ht]
\centering
\caption{The performance of the DGM for the 2D Stokes equations.}
\begin{tabular}{c@{\extracolsep{0.2em}}c@{\extracolsep{0.2em}}c@{\extracolsep{0.2em}}c@{\extracolsep{0.2em}}c@{\extracolsep{0.2em}}c}
\hline
ARCH 1    &2D-DS-1           &2D-DS-2              &2D-DS-3              &2D-DS-4     \\ \hline
$errL^{1}$     &$7.82\times10^{-2}$  &$\bf{7.31\times10^{-3}}$     &$1.41\times10^{-2}$     &$9.39\times10^{-3}$  \\
$errL^{2}$     &$5.48\times10^{-3}$  &$\bf{4.91\times10^{-5}}$     &$1.58\times10^{-4}$     &$8.23\times10^{-5}$  \\
$J(\overline{U})$    &$1.52\times10^{-2}$  &$2.05\times10^{-3}$     &$\bf{1.75\times10^{-3}}$     &$1.79\times10^{-3}$    \\\hline
ARCH 2    &2D-DS-1           &2D-DS-2              &2D-DS-3              &2D-DS-4      \\ \hline
$errL^{1}$     &$5.45\times10^{-5}$  &$5.70\times10^{-5}$     &$\bf{4.18\times10^{-5}}$     &$1.30\times10^{-4}$    \\
$errL^{2}$    &$3.18\times10^{-9}$  &$3.34\times10^{-9}$     &$\bf{1.95\times10^{-9}}$     &$1.60\times10^{-8}$   \\
$J(\overline{U})$   &$\bf{9.60\times10^{-8}}$  &$2.71\times10^{-7}$     &$1.02\times10^{-7}$     &$5.14\times10^{-7}$ \\\hline
ARCH 3   &2D-DS-1           &2D-DS-2              &2D-DS-3              &2D-DS-4   \\ \hline
$errL^{1}$    &$8.09\times10^{-6}$  &$1.53\times10^{-5}$     &$1.23\times10^{-5}$     &$\bf{3.93\times10^{-6}}$ \\
$errL^{2}$    &$6.23\times10^{-11}$ &$2.58\times10^{-10}$    &$1.70\times10^{-10}$    &$\bf{1.91\times10^{-11}}$\\
$J(\overline{U})$   &$1.77\times10^{-8}$  &$4.36\times10^{-8}$     &$1.15\times10^{-8}$     &$\bf{2.59\times10^{-9}}$   \\\hline\
\end{tabular}
\label{lab.121}
\end{table}
\begin{table}[ht]
\centering
\caption{The performance of the DGM for the 3D Stokes equations, when $z=0.5$.}
\begin{tabular}{c@{\extracolsep{0.2em}}c@{\extracolsep{0.2em}}c@{\extracolsep{0.2em}}c@{\extracolsep{0.2em}}c@{\extracolsep{0.2em}}c}
\hline
ARCH 2     &3D-DS-1           &3D-DS-2              &3D-DS-3              &3D-DS-4      \\ \hline
$errL^{1}$    &$6.28\times10^{-2}$         &$1.01\times10^{-1}$     &$5.91\times10^{-2}$     &$\bf{5.82\times10^{-2}}$    \\
$errL^{2}$    &$2.18\times10^{-3}$         &$5.53\times10^{-3}$     &$2.01\times10^{-3}$     &$\bf{1.93\times10^{-3}}$\\
$J(\overline{U})$    &$9.59\times10^{-2}$  &$9.83\times10^{-2}$     &$\bf{5.08\times10^{-2}}$     &$6.26\times10^{-2}$    \\\hline
ARCH 3     &3D-DS-1           &3D-DS-2              &3D-DS-3              &3D-DS-4    \\ \hline
$errL^{1}$    &$1.75\times10^{-2}$         &$\bf{7.68\times10^{-3}}$     &$9.69\times10^{-3}$     &$1.32\times10^{-2}$ \\
$errL^{2}$    &$1.99\times10^{-4}$         &$\bf{4.14\times10^{-5}}$     &$6.34\times10^{-5}$     &$1.00\times10^{-4}$\\
$J(\overline{U})$    &$8.89\times10^{-3}$  &$4.07\times10^{-3}$     &$\bf{2.53\times10^{-3}}$     &$6.90\times10^{-3}$    \\\hline
ARCH 4    &3D-DS-1           &3D-DS-2              &3D-DS-3              &3D-DS-4    \\ \hline
$errL^{1}$    &$1.15\times10^{-2}$         &$4.44\times10^{-3}$     &$9.69\times10^{-3}$     &$\bf{4.23\times10^{-3}}$ \\
$errL^{2}$    &$1.02\times10^{-4}$         &$1.31\times10^{-5}$     &$6.34\times10^{-5}$     &$\bf{1.27\times10^{-5}}$\\
$J(\overline{U})$    &$5.49\times10^{-3}$  &$7.07\times10^{-4}$     &$1.74\times10^{-3}$     &$\bf{6.56\times10^{-4}}$    \\\hline
ARCH 5    &3D-DS-1           &3D-DS-2              &3D-DS-3              &3D-DS-4    \\ \hline
$errL^{1}$    &$2.20\times10^{-3}$         &$\bf{1.60\times10^{-3}}$     &$1.63\times10^{-3}$     &$2.10\times10^{-3}$ \\
$errL^{2}$    &$3.23\times10^{-6}$         &$\bf{1.91\times10^{-6}}$     &$2.38\times10^{-6}$     &$3.77\times10^{-6}$\\
$J(\overline{U})$    &$3.49\times10^{-4}$  &$\bf{8.42\times10^{-5}}$     &$9.56\times10^{-5}$     &$1.01\times10^{-4}$    \\\hline
ARCH 6    &3D-DS-1           &3D-DS-2              &3D-DS-3              &3D-DS-4    \\ \hline
$errL^{1}$    &$1.23\times10^{-3}$         &$1.75\times10^{-3}$     &$-$        &$\bf{9.75\times10^{-4}}$ \\
$errL^{2}$    &$1.33\times10^{-6}$         &$2.34\times10^{-6}$     &$-$
&$\bf{7.10\times10^{-7}}$\\
$J(\overline{U})$    &$2.49\times10^{-4}$  &$1.38\times10^{-4}$     &$-$
&$\bf{3.94\times10^{-5}}$    \\\hline
\end{tabular}
\label{lab.3d2}
\end{table}
Figures \ref{STOKESL2loss} - \ref{Fig.3d2} demonstrate the $errL^{2}$ norm in both 2D and 3D cases ($z=0.5$). Observed from Figures \ref{STOKESL2loss} - \ref{Fig.3d2}, the more red areas, the better performance of the algorithm. Certainly, we find that the subfigures $b$, $d$ in Figure \ref{STOKESL2loss}, $f$, $g$ in Figure \ref{Fig.3d2} are stable than others. The numerical result of subfigure $l$ in Figure \ref{STOKESL2loss} is $1.91\times10^{-11}$, but it does not look stable, which is probably due to the influence of boundary points and corner points. In addition, Figures \ref{STOKESA3} depict $errL^{2}$ norm between $u$ and $U$ in 2D case. Tables \ref{lab.121} - \ref{lab.3d2} display three norms in both 2D and 3D cases ($z=0.5$), the best results for each ARCH are marked out. From Tables \ref{lab.121} - \ref{lab.3d2}, we can find that numerical results with different datasets are less distinguishable. The precision of the neural network is related to the number of layers and neurons, and does not depend on the size of the datasets. The accuracy of neural networks gets better and better only as the number of hidden layers increases. In 2D case, an interesting phenomenon can be found that the best result obtained by using ARCH-3 and the network with more than three hidden layers will have over-fitting. Obviously, for high-dimensional problems, fewer layers neural network is not enough to achieve the required precision. Therefore, it is indispensable to adopt deeper layers since the non-deep neural network has great limitation for the expression of nonlinear relationship. A particularly significant consideration, there appears over-fitting phenomena by using 3D-DS-3 and ARCH-6, which shows that 6 hidden layers is enough to solve the 3D problem.
\begin{figure}[ht]
\centering
\includegraphics[scale=0.4]{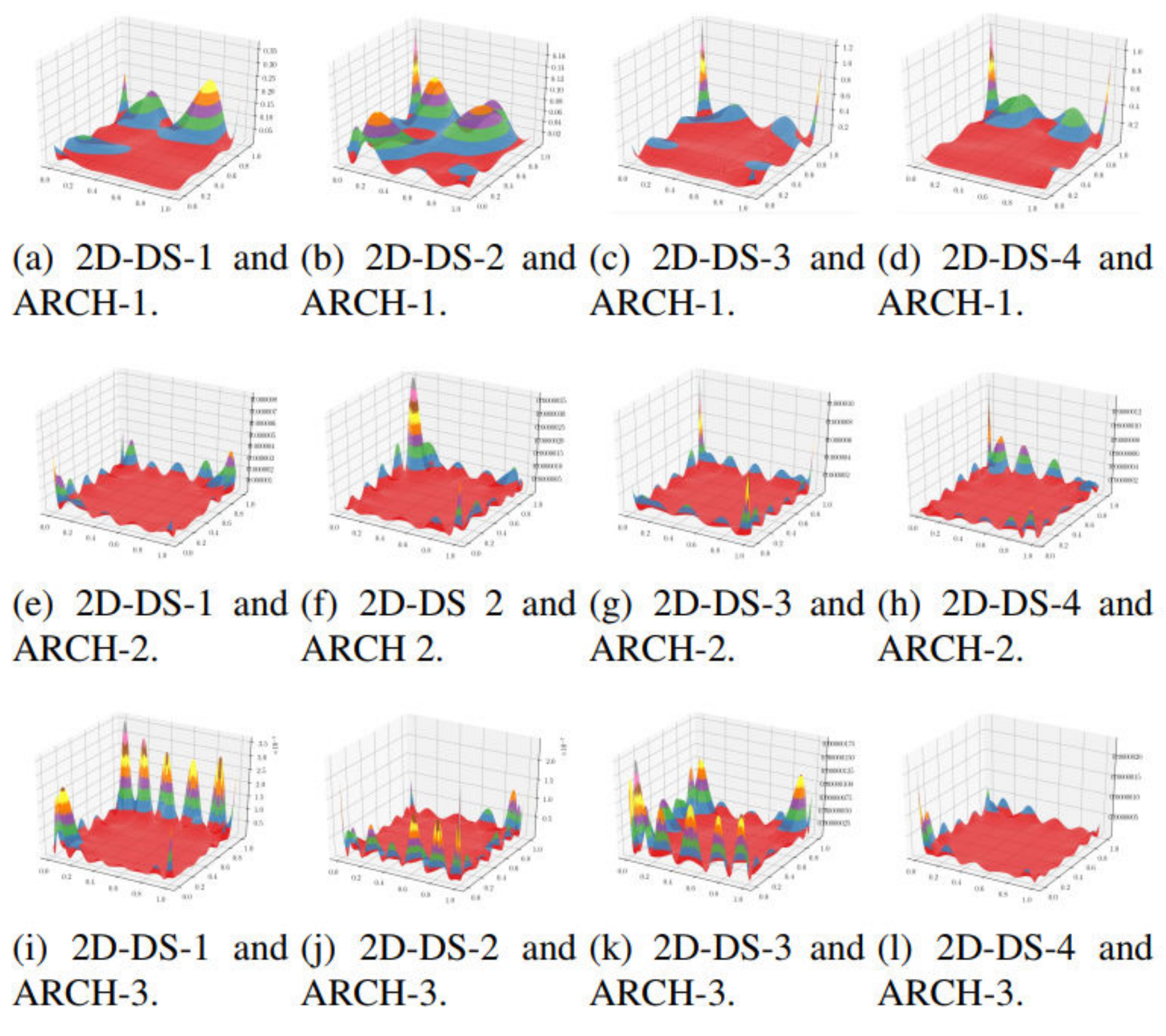}
\caption{The $errL^{2}$ norm of the 2D general Stokes equations. }
\label{GENERALSTOKESL2loss}
\end{figure}

\subsection{The general Stokes equations}
Encouraged by positive results in previous experiments, in this subsection, we mainly consider the effect of the DGM for the general Stokes equations with homogeneous boundary condition in both 2D and 3D cases. Here, we set $\alpha=1$, $\nu=1$ in equations (\ref{stokes-1}) - (\ref{stokes-3}), and apply the analytical solutions (\ref{11}) and (\ref{3dequation1}) for 2D and 3D cases respectively. Consequently, the right hands $f(x,y)$ and $f(x,y,z)$ can be derived by  equation (\ref{stokes-1}).
\begin{figure}[ht]
\centering
\includegraphics[scale=0.4]{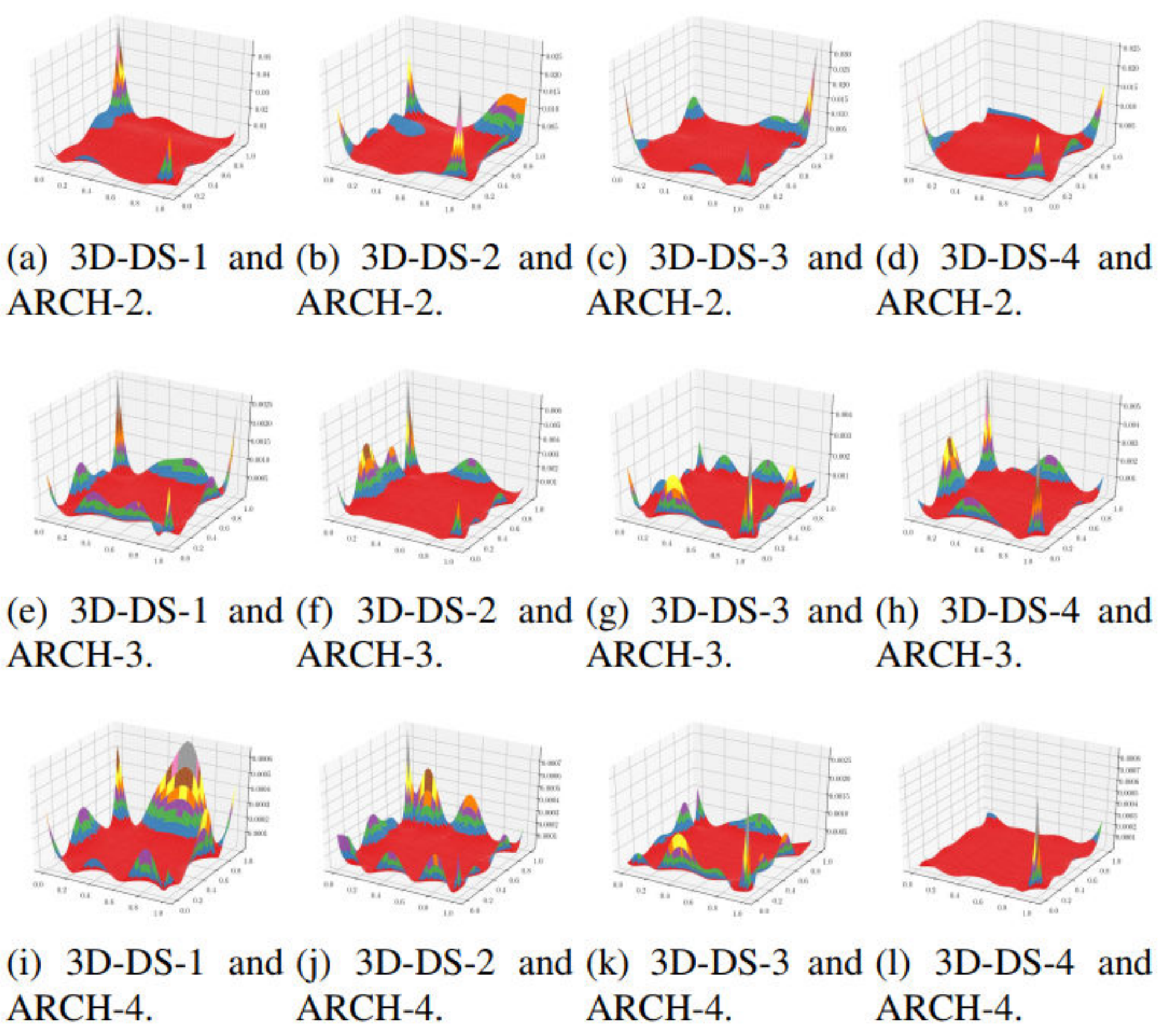}
\caption{The $errL^{2}$ norm of the 3D general Stokes equations. }
\label{Fig.3d3}
\end{figure}
\begin{table}[ht]
\centering
\caption{The performance of the DGM for the 2D general Stokes equations.}
\begin{tabular}{c@{\extracolsep{0.2em}}c@{\extracolsep{0.2em}}c@{\extracolsep{0.2em}}c@{\extracolsep{0.2em}}c@{\extracolsep{0.2em}}c}
\hline
ARCH 1     &2D-DS-1           &2D-DS-2              &2D-DS-3              &2D-DS-4   \\ \hline
$errL^{1}$     &$\bf{2.09\times10^{-1}}$  &$2.12\times10^{-1}$     &$2.84\times10^{-1}$     &$2.97\times10^{-1}$  \\
$errL^{2}$     &$3.65\times10^{-2}$  &$\bf{3.05\times10^{-2}}$     &$7.02\times10^{-2}$     &$7.87\times10^{-2}$  \\
$J(\overline{U})$        &$1.4\times10^{-1}$  &$\bf{7.74\times10^{-2}}$     &$2.38\times10^{-1}$      &$2.30\times10^{-1} $    \\\hline
ARCH 2     &2D-DS-1           &2D-DS-2              &2D-DS-3              &2D-DS-4      \\ \hline
$errL^{1}$    &$4.70\times10^{-4}$  &$3.86\times10^{-4}$     &$4.98\times10^{-4}$     &$\bf{1.81\times10^{-4}}$    \\
$errL^{2}$    &$2.13\times10^{-7}$  &$1.54\times10^{-7}$     &$2.43\times10^{-7}$     &$\bf{3.69\times10^{-8}}$   \\
$J(\overline{U})$        &$1.68\times10^{-5}$  &$1.70\times10^{-5}$     &$9.57\times10^{-6} $    &$\bf{2.06\times10^{-6}} $ \\\hline
ARCH 3     &2D-DS-1           &2D-DS-2              &2D-DS-3              &2D-DS-4   \\ \hline
$errL^{1}$    &$1.25\times10^{-4}$  &$\bf{8.99\times10^{-5}}$     &$3.71\times10^{-4}$     &$1.88\times10^{-4}$ \\
$errL^{2}$    &$1.78\times10^{-8}$  &$\bf{8.08\times10^{-9}}$     &$1.33\times10^{-7}$     &$3.79\times10^{-8}$\\
$J(\overline{U})$        &$3.04\times10^{-6}$  &$1.94\times10^{-6}$     &$2.20\times10^{-6}$     &$\bf{1.48\times10^{-6}}$   \\\hline
\end{tabular}
\label{lab.1211}
\end{table}
The plots of $errL^{2}$ norm are shown in Figures \ref{GENERALSTOKESL2loss} - \ref{Fig.3d3}. In the same manner, the value of three different norms between the neural network and the exact solution are displayed in Tables \ref{lab.1211} - \ref{lab.3d3}. Evidently, the $errL^{2}$ norm is going to change by using different ARCHs, as shown in Figures \ref{GENERALSTOKESL2loss} - \ref{Fig.3d3} and Tables \ref{lab.1211} - \ref{lab.3d3}. The similar results just as for the 2D case can be derived by Figures \ref{GENERALSTOKESA3}. In the same way, the error decreases gradually as the number of hidden layers increases, especially by using ARCH-3 and ARCH-6 in 2D and 3D case respectively.
\begin{figure}[ht]
\centering
\includegraphics[scale=0.4]{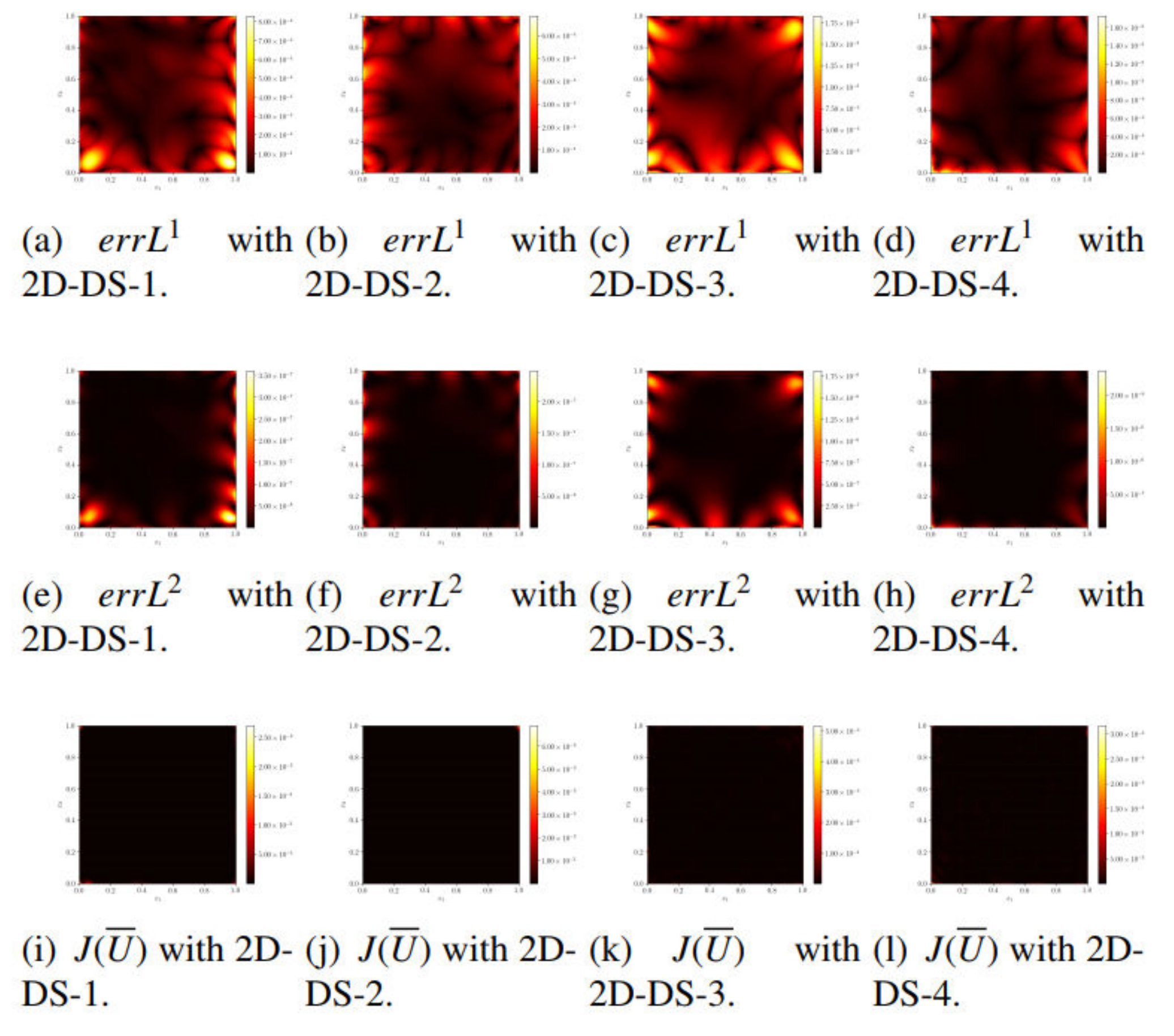}
\caption{The three norms of the 2D general Stokes equations by using ARCH-3. }
\label{GENERALSTOKESA3}
\end{figure}

\begin{table}[ht]
\centering
\caption{The performance of the DGM for the 3D general Stokes equations, when $z=0.5$.}
\begin{tabular}{c@{\extracolsep{0.2em}}c@{\extracolsep{0.2em}}c@{\extracolsep{0.2em}}c@{\extracolsep{0.2em}}c@{\extracolsep{0.2em}}c}
\hline
ARCH 2     &3D-DS-1           &3D-DS-2              &3D-DS-3              &3D-DS-4     \\ \hline
$errL^{1}$    &$6.73\times10^{-2}$         &$6.19\times10^{-2}$     &$4.67\times10^{-2}$     &$\bf{4.15\times10^{-2}}$    \\
$errL^{2}$    &$2.52\times10^{-3}$         &$1.89\times10^{-3}$     &$1.26\times10^{-3}$ &$\bf{1.06\times10^{-3}}$\\
$J(\overline{U})$        &$6.36\times10^{-2}$  &$4.89\times10^{-2}$     &$\bf{4.69\times10^{-2}} $    &$5.16\times10^{-2} $ \\\hline
ARCH 3    &3D-DS-1           &3D-DS-2              &3D-DS-3              &3D-DS-4  \\ \hline
$errL^{1}$    &$\bf{1.38\times10^{-2}}$         &$1.80\times10^{-2}$     &$1.53\times10^{-2}$     &$1.52\times10^{-2}$ \\
$errL^{2}$    &$\bf{1.30\times10^{-4}}$         &$2.29\times10^{-4}$     &$1.67\times10^{-4}$     &$1.81\times10^{-4}$\\
$J(\overline{U})$        &$8.96\times10^{-3}$  &$\bf{3.17\times10^{-3}}$     &$4.19\times10^{-3} $    &$3.39\times10^{-3} $ \\\hline
ARCH 4    &3D-DS-1           &3D-DS-2              &3D-DS-3              &3D-DS-4  \\ \hline
$errL^{1}$    &$7.52\times10^{-3}$         &$6.48\times10^{-3}$     &$1.16\times10^{-2}$     &$\bf{3.58\times10^{-3}}$ \\
$errL^{2}$    &$4.19\times10^{-5}$         &$3.08\times10^{-5}$     &$9.77\times10^{-5}$     &$\bf{9.11\times10^{-6}}$\\
$J(\overline{U})$        &$1.02\times10^{-2}$  &$6.34\times10^{-4}$     &$1.63\times10^{-3} $    &$\bf{4.51\times10^{-4}} $ \\\hline
ARCH 5    &3D-DS-1           &3D-DS-2              &3D-DS-3              &3D-DS-4  \\ \hline
$errL^{1}$    &$\bf{1.96\times10^{-3}}$         &$2.67\times10^{-3}$     &$8.78\times10^{-3}$
&$-$ \\
$errL^{2}$    &$\bf{3.00\times10^{-6}}$         &$4.53\times10^{-6}$     &$5.84\times10^{-5}$     &$-$\\
$J(\overline{U})$        &$5.15\times10^{-4}$  &$\bf{2.08\times10^{-4}}$     &$6.90\times10^{-4} $    &$-$ \\\hline
ARCH 6    &3D-DS-1           &3D-DS-2              &3D-DS-3              &3D-DS-4  \\ \hline
$errL^{1}$    &$1.66\times10^{-3}$         &$1.87\times10^{-3}$     &$\bf{1.28\times10^{-3}}$
&$2.11\times10^{-3}$ \\
$errL^{2}$    &$2.37\times10^{-6}$         &$2.63\times10^{-6}$     &$\bf{1.60\times10^{-6}}$     &$3.31\times10^{-6}$\\
$J(\overline{U})$        &$1.21\times10^{-4}$  &$1.20\times10^{-4}$     &$\bf{7.37\times10^{-5}} $    &$1.16\times10^{-4}$ \\\hline
\end{tabular}
\label{lab.3d3}
\end{table}

\subsection{The driven cavity flow}
The driven cavity flows have been extensively applied as test cases for validating the incompressible fluid dynamics algorithm. The corner singularities for the 2D fluid flows are very important since most examples of physical interest have corners. In these two examples, we consider the 2D driven flow in a rectangular cavity when the top surface moves with a constant velocity along its length. The upper corners where the moving surface meets the stationary walls are singular points of the flow at the multi-valued horizontal velocity. The lower corners are also weakly singular points. Moreover, we also consider the 3D driven flow in a cube of unit volume, centered at $x = y = z = 0.5$ (Figure \ref{23dflow}). A unit tangential velocity in the $x$ direction is prescribed at the top surface, while zero velocity is prescribed on the remaining bounding surfaces in numerical examples.
\begin{figure}[H]
\centering
\includegraphics[scale=0.4]{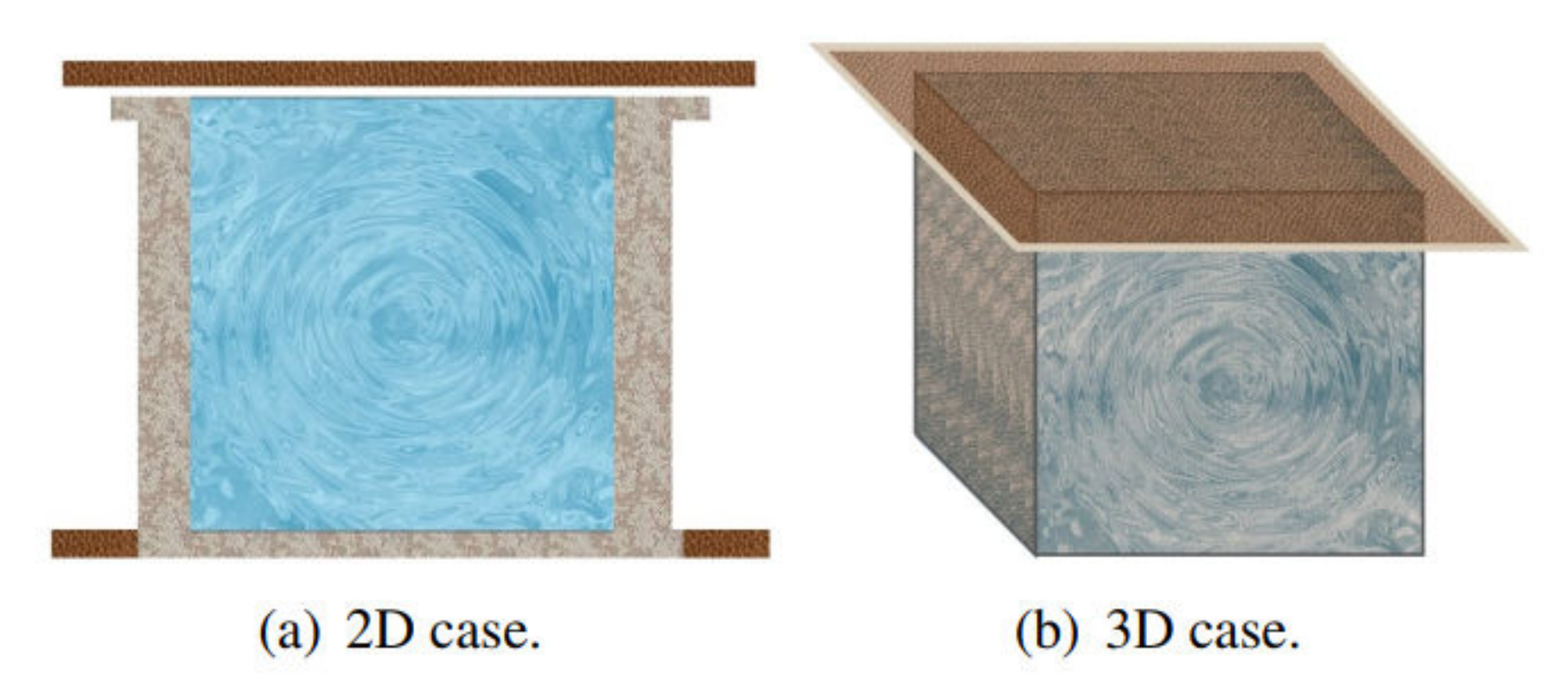}
\caption{Diagram of 2D and 3D square cavity flow.}
\label{23dflow}
\end{figure}

\begin{figure}[ht]
\centering
\includegraphics[scale=0.4]{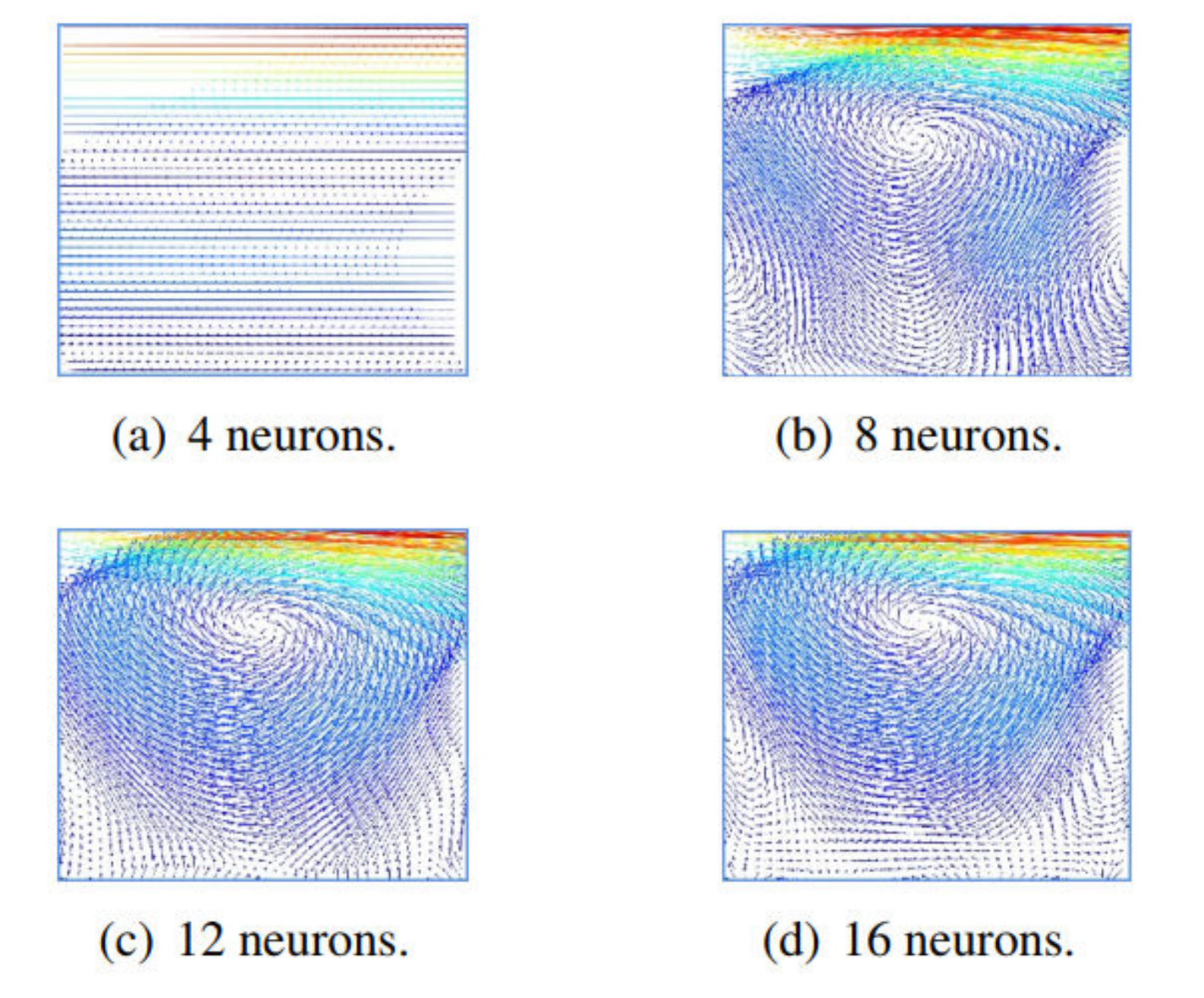}
\caption{The 2D driven cavity flow with different neurons and ARCH-1.}
\label{Fig.12}
\end{figure}

\begin{figure}[ht]
\centering
\includegraphics[scale=0.4]{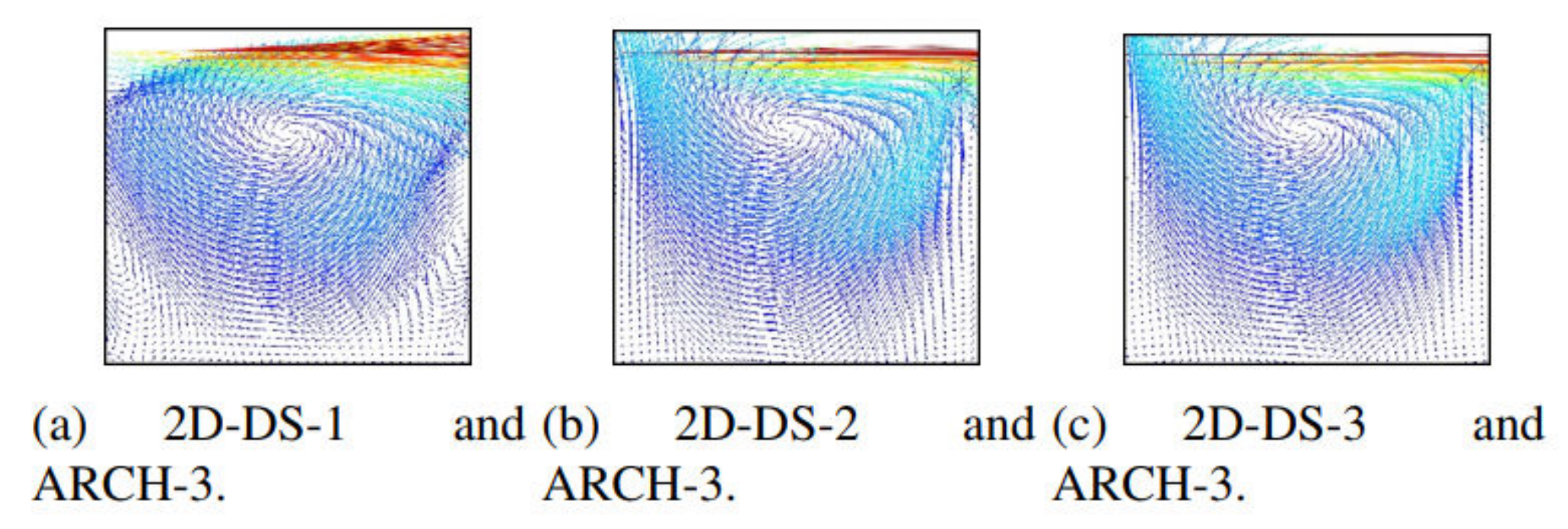}
\caption{The 2D driven cavity flow. }
\label{Fig.13}
\end{figure}

\begin{figure}[ht]
\centering
\includegraphics[scale=0.4]{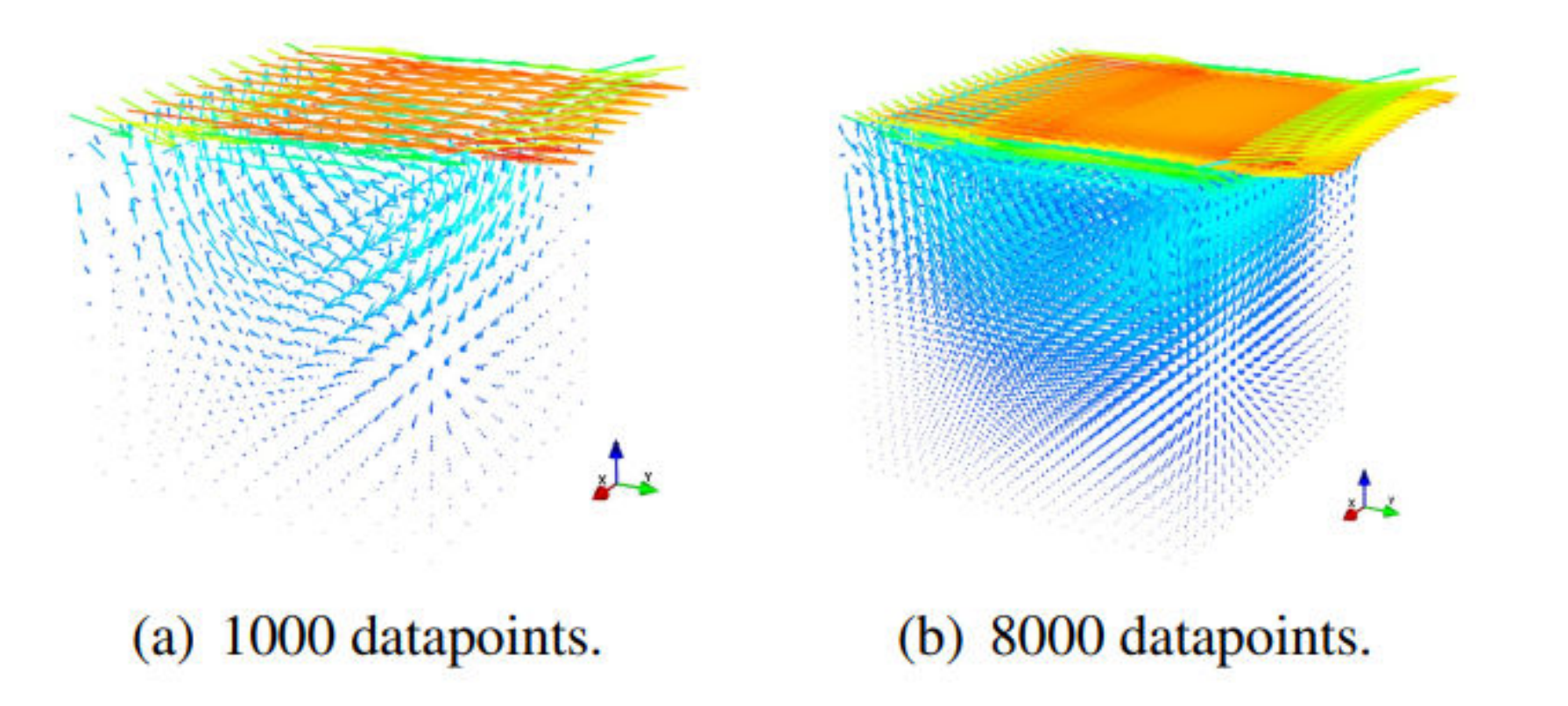}
\caption{The 3D driven cavity flow testing on two different sampled data points.}
\label{Fig.3d4}
\end{figure}

\begin{figure}[ht]
\centering
\includegraphics[scale=0.4]{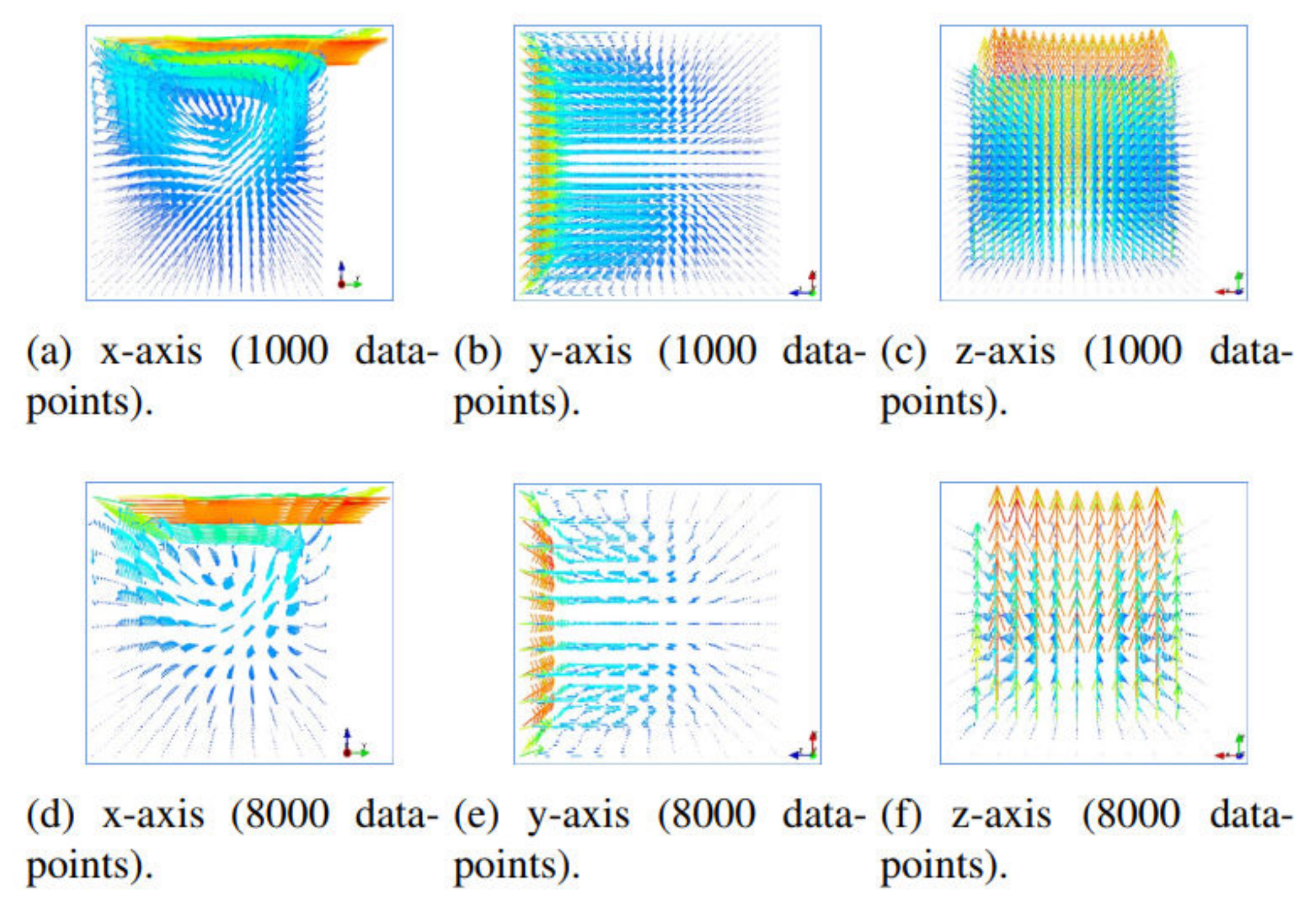}
\caption{The top view along 3 axis of the 3D driven cavity flow. }
\label{Fig.3d5}
\end{figure}
In order to explore how many neurons are enough to simulate the driven cavity flow, we apply 4, 8, 12 and 16 neurons for testing. From Figure \ref{Fig.12}, we can find that the 2D driven cavity flow is stablest when using 16 neurons. Moreover, the optimal models for 2D and 3D cases are obtained by training the existing 2D-DS and 3D-DS respectively. Then, we test the different neural networks by using 1600 data points in 2D case (Figure \ref{Fig.13}). In like wise, the better results are only related to the number of hidden layers. Especially, the results by using ARCH 3 are in perfect agreement with the physical significance since ARCH 3 has more hidden layers than others. Furthermore, the optimal framework for the 3D case obtained by using 3D-DS-3 and ARCH-2. As shown in Figures \ref{Fig.3d4} - \ref{Fig.3d5}, we utilize $1000$ and $8000$ data points for testing and intercepts the top view along 3 axis respectively. The numerical results indicate that the DGM is efficient and accurate.

\section{Conclusions}
This paper applies the DGM to solve the general Stokes problems in both 2D and 3D cases with high efficiency and accuracy, which can transform the traditional grid mesh method into a grid free algorithm by using the random sampled data. Besides, we set the objective function appropriately to convert the constrained problem into an unconstrained problem in the sense and give two theorems to ensure the convergence of the objective function and the convergence of the neural network to the exact solution. In general, this method is based on drawing random sampled points from the domain, which can be readily
extended to arbitrary domains; triangulation of the domain is not needed. The numerical results fully demonstrate the convergence properties of the DGM completely. But, we need more deliberation on the elements which have great impact in experiments. For example, how to construct the most suitable objective function for measuring loss and applied to optimization, whether the deeper network can get better results and randomness have a positive effect on the algorithm.

\section*{References}

\end{document}